\sloppy \pagestyle{plain}
\newtheorem{theorem}[equation]{Theorem}
\newtheorem*{theorem*}{Theorem}
\newtheorem{lemma}[equation]{Lemma}
\newtheorem{corollary}[equation]{Corollary}
\newtheorem{conjecture}[equation]{Conjecture}
\newtheorem{question}[equation]{Question}
\newtheorem{proposition}[equation]{Proposition}
\theoremstyle{definition}
\newtheorem{example}[equation]{Example}
\newtheorem{definition}[equation]{Definition}
\newtheorem*{definition*}{Definition}
\newtheorem{resolution}[equation]{Resolution Procedure}
\theoremstyle{remark}
\newtheorem{remark}[equation]{Remark}
\makeatletter\@addtoreset{equation}{section}
\makeatletter\@addtoreset{section}{part}
\def \CC {\mathcal{C}}
\def \P {\mathbb{P}}
\def \PP {\mathbb{P}}
\def \CC {\mathbb{C}}
\def \ZZ {\mathbb{Z}}
\def \Aff {\mathbb{A}}
\def \ge {\geqslant}
\def \le {\leqslant}
\title{On Hodge numbers of complete intersections\\ and Landau--Ginzburg models}
\author{Victor Przyjalkowski, Constantin Shramov}
\thanks{
This work was performed in Steklov Mathematical Institute and supported by the Russian Science Foundation under grant 14-50-00005.
%V.P. was partially supported by grants
%NSF FRG DMS-0854977, NSF DMS-0854977,
%NSF DMS-0901330, grants FWF P24572-N25 and FWF P20778,
%RFFI grant 12-01-31012, grants MK-1192.2012.1 and MK-696.2014.1.
%C.S. was partially supported by grants MK-6612.2012.1 and MK-2858.2014.1.
%Both authors were partially supported by
%RFFI grants 11-01-00336-a, 11-01-00185-a, 12-01-33024, and 14-01-00160,
%grants
%NSh-5139.2012.1 and  NSh-2998.2014.1, AG Laboratory NRU HSE,
%RF government grant, ag.  11.G34.31.0023,
%and Dynasty foundation.
}
\address{
Steklov Mathematical Institute, 8 Gubkina st., Moscow 119991, Russia,\newline
\phantom{La}Laboratory of Algebraic Geometry, GU-HSE, 7 Vavilova st., Moscow 117312,
Russia}\email{victorprz@mi.ras.ru, costya.shramov@gmail.com}
\begin{document}

\begin{abstract}
We prove that the Hodge number
$h^{1,N-1}(X)$ of
an $N$-dimensional ($N\geqslant 3$) Fano complete intersection $X$
is less by one then
%equals %$k_{LG(X)}-1$, where $k_{LG(X)}$ is
the number of irreducible
components of the central fiber of
(any)
Calabi--Yau compactification
of Givental's Landau--Ginzburg model for $X$.
\end{abstract}

\maketitle

\section{Introduction}
\label{section:intro}

%Mirror Symmetry conjectures relate the symplectic properties of a
%variety $X$ to algebro-geometric properties of the mirror dual object $Y$,
%%--- a variety $Y$ (or one-parametric family of Calabi--Yau varieties $Y\to \Aff^1$ called \emph{Lan})
%and vice-versa, relate algebro-geometric properties of $X$ to
%symplectic properties of $Y$.
%%We start by skipping the usual description of Mirror Symmetry;
%%those who
%%need it can find it in most of papers in this field.
%We remind that, in the Fano
%case, %Mirror Symmetry is a duality between Fano varieties
%%and so
%the dual object $Y$ is a so called
%\emph{Landau--Ginzburg model}~---
%a (non-compact) variety equipped with a non-trivial
%complex-valued function satisfying certain conditions (see~\cite{HV00} and~\cite{Ko94} for details, and also~\cite{Ba94} for the toric case).

Mirror Symmetry conjectures relate symplectic properties of a
variety $X$ to algebro-geometric properties for its mirror symmetry pair
--- a variety $Y$ (or one-parametric family of Calabi--Yau varieties $Y\to \Aff^1$)
and vice-versa, relate algebro-geometric properties of $X$ to
symplectic properties of $Y$.
Homological Mirror Symmetry (see~\cite{Ko94}) treats mirror correspondence in terms of derived categories.
It associates two categories with each variety or family.
Given a symplectic form on $X$, %(in our considerations it is an anticanonical form $\omega_X$)
one can associate
a so-called \emph{Fukaya category} $Fuk\,(X)$ with $X$ whose objects are Lagrangian submanifolds with respect to the symplectic form.
The relative version of this category, \emph{a Fukaya--Seidel category} $FS(Y)$ can be associated with $Y$.
On the algebraic side of the picture, there are a derived category of coherent sheaves $D^b(X)$ for $X$ and a derived category
of singularities $D^b_{sing}(Y)$ for $Y$, that is,
a direct sum of categories over all fibers whose objects are complexes of coherent sheaves modulo perfect complexes.
Homological Mirror Symmetry conjecture for Fano varieties predicts that for any Fano manifold $X$ there exists a so-called \emph{Landau--Ginzburg model}~\mbox{$Y\to \Aff^1$} such that their categories are cross-equivalent:
$$
Fuk\,(X)\simeq D^b_{sing}(Y),\ \ \  D^b(X)\simeq FS(Y).
$$

%Homological Mirror Symmetry conjecture is very powerful but unfortunately it is very hard to prove it for particular %mirror pairs.
%So t
%The natural first step of proving Homological Mirror Symmetry conjecture is checking coincidence of some invariants %of categories discussed above.
%A natural invariant of a category $\mathcal C$ is its \emph{Hochschild cohomology} $HH^*(\mathcal C)$. For Fukaya %category
%Hochschild cohomology is nothing but \emph{Quantum cohomology} (see Subsection~\ref{subsection:quantum cohomology}).
%Scaling the symplectic form one can vary Fukaya categories. In this way one obtain a so called \emph{non-commutative Hodge structure}.
%In the similar way one can associate a non-commutative Hodge structure with $D^b_{sing}(Y)$
%(more precisely, with each singular fiber of $Y\to \Aff^1$).

In the last two decades Mirror Symmetry, in particular Homological Mirror Symmetry, was deeply developed
and studied on a lot of examples. We mention Gromov--Witten theory --- numerical reflection of Fukaya
category (see, for instance~\cite{KM94} or Manin's book~\cite{Ma99}), and Quantum Lefschetz Theorem (see~\cite{Gi96}).
In~\cite{Gi96} (see also~\cite{HV00}), Givental suggested the version of Landau--Ginzburg models we will use in the paper.
Mirror Symmetry construction for toric varieties and complete intersections therein one can find in~\cite{Ba94}.
This approach can be generalized to complete intersections in varieties admitting good enough toric degeneration,
see, for instance~\cite{Ba97},~\cite{BCFKS97}, and~\cite{BCFKS98}.
Homological Mirror Symmetry for Fano varieties we are interested in was, sometimes partially,
proved for del Pezzo surfaces (\cite{AKO06}) and toric varieties (\cite{Ab09}).
For non-Fano, noncompact, nonsmooth, or noncommutative cases (some versions of) Mirror Symmetry conjectures were verified or
particular constructions were suggested. Let us mention some of results.
A.\,Polishchuk and E.\,Zaslow in~\cite{PZ98} proved Mirror Symmetry for elliptic curves.
K.\,Hori and C.\,Vafa in~\cite{HV00} considered physical aspects of Mirror Symmetry and provide a lot of constructions. K.\,Fukaya discussed it for abelian varieties in~\cite{F02}. Quartic surface case is considered by P.\,Seidel in~\cite{Se03}. In~\cite{Au07}, D.\,Auroux considered connections between Mirror Symmetry and T-duality. D.\,Auroux, L.\,Katzarkov, and D.\,Orlov proved Mirror Symmetry for del Pezzo surfaces, weighted projective planes, and their noncommutative deformations in~\cite{AKO06} and~\cite{AKO08}. Genus two case was proved by P.\,Seidel in,~\cite{Se11}; this result was extended to curves of higher genera by A.\,Efimov in~\cite{Ef09}.
General type case is considered in~\cite{KKOY09} by A.\,Kapustin, L.\,Katzarkov, D.\,Orlov, and M.\,Yotov.
M.\,Abouzaid and I.\,Smith proved Mirror Symmetry for two-dimensional complex tori equipped with the standard symplectic form
in~\cite{AS10}. Mirror correspondence for punctured spheres was proved by M.\,Abouzaid, D.\,Auroux, A.\,Efimov, L.\,Katzarkov, D.\,Orlov in~\cite{AAEKO13}.

As a consequence of the equivalences above one gets a coincidence of
so-called \emph{noncommutative Hodge structures}.
For definitions and constructions of these
structures, see~\cite{KKP08}.
%The coincidence of non-commutative Hodge structures
This effect is called \emph{Mirror Symmetry conjecture of variations of Hodge structures}.
It enables
one to translate the mirror correspondence for Fano varieties to a
quantitative level. %In the paper we consider mirror duality from this point of view (see Section~\ref{section:toric LG}).
In this version of Mirror Symmetry, a Landau--Ginzburg model
of an $N$-dimensional Fano variety $X$ is by definition an $N$-dimensional variety~$Y$
with a nontrivial map to $\Aff^1$ whose fibers are (compact) Calabi--Yau varieties,
such that the regularized quantum differential operator for $X$ coincides with the Picard--Fuchs operator
for~$Y$ (see Section~\ref{section:toric LG} for details).
This type of Mirror Symmetry is much more easy to establish,
and it is also more convenient to extract
some numerical invariants (say, certain Hodge
numbers) of an initial Fano variety from its dual Landau--Ginzburg model.

Our
motivating problem
is as follows: what information about an
initial Fano variety can we get from its
Landau--Ginzburg model? %Of course w
Suppose that $X$ is a smooth Fano threefold of Picard rank one. Then
its Landau--Ginzburg model is known in the sense of Mirror Symmetry of variations of
Hodge structures (see~\cite{Prz13}).
Moreover, according to~\cite{DHKLP}, under some mild natural conditions
all Landau--Ginzburg models for a given threefold
are birational in codimension one. In particular, this means
that if a Landau--Ginzburg model (considered as a fibration),
has reducible fibers, then the numbers of irreducible
components of each fiber do not
depend on a particular choice of a Landau--Ginzburg model.
In~\cite{Prz13}, it is proved that $X$ has a unique reducible fiber and its number of components
equals $\dim(J_X)+1$, where~$J_X$ is the intermediate
Jacobian of~$X$.

What happens in the
higher dimensional case?
In~\cite{GKR12} Gross, Katzarkov, and Ruddat consider $N$-dimensional general type hypersurface $X$
in a smooth toric variety and its \mbox{$N$-dimensional} Landau--Ginzburg model $LG(X)$.
They consider a perverse sheaf~$\mathcal F_{LG(X)}$
of vanishing cycles to the central fiber of $LG(X)$ and
construct a spectral sequence for
cohomologies of $\mathcal F_{LG(X)}$. Via this spectral sequence, they relate
the number of irreducible components of the central fiber of $LG(X)$ with the Hodge number $h^{1,N-1}(X)$.
%Conjecturally this can be extended to the following.

Mirror duality for Calabi--Yau varieties manifests itself as matching of the
Hodge diamond of a Calabi--Yau variety
and $90^\circ$-rotated Hodge diamond of a mirror dual Calabi--Yau variety (see e.\,g.~\cite{Ba94}).
In~\cite{KKP14}, Katzarkov, Kontsevich, and Pantev give an analog of this duality
for Fano varieties and their dual Landau--Ginzburg models. That is, they propose that
for an $N$-dimensional Fano variety $X$ a Hodge number $h^{p,q}(X)$ is the
dimension of the
$(N-p)$-cohomology group of the sheaf
of so-called \emph{$f_X$-adapted logarithmic \mbox{$q$-forms}}
of a dual to $X$ Landau--Ginzburg model $f_X$.
This sheaf consists of forms with a specific %(depending on $f_X$)
condition on orders of poles
on divisors forming a fiber of~$f_X$ over infinity.
The dimensions of these cohomology groups are related to the sizes of
the Jordan blocks of the monodromy around
infinity, which in turn gives a condition
on the monodromy around reducible fibers. %the central fiber.

On a quantitative level for a particular
case of~$h^{1,N-1}(X)$,
this could be interpreted as follows.
For a smooth $N$-dimensional Fano variety
$X$, let $LG(X)$ be its $N$-dimensional Landau--Ginzburg model.
%of dimension $N\geqslant 3$
%denote
Put
\begin{multline*}
k_{LG(X)}= \sharp\mbox{\big(irreducible components of all reducible
fibers of $LG(X)$\big)} - \\
- \sharp \mbox{\big(reducible fibers\big)}.
\end{multline*}

%the number of irreducible components of all reducible
%fibers of its $N$-dimensional Landau--Ginzburg model $LG(X)$
%minus the number of reducible fibers
%(in all examples we are aware of
%such fiber is unique provided that the Picard rank is one)
%by $k_{LG(X)}$.

\begin{conjecture}[see~\cite{GKR12}]
\label{conjecture}
For a smooth Fano variety
$X$ of dimension $N\geqslant 3$, one has $h^{1,N-1}(X)=k_{LG(X)}$.
\end{conjecture}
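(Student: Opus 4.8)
Since the conjecture is stated for arbitrary smooth Fano varieties, I would first restrict to the range in which an explicit model is available, namely to a Fano complete intersection $X=X_{d_1,\dots,d_k}\subset\PP^{N+k}$; this is the case treated in the present paper, and the general conjecture seems beyond the reach of the methods below. For such an $X$ one expects the Landau--Ginzburg model to carry a single reducible fiber, its central fiber, so that
\[
k_{LG(X)}=\sharp\{\text{irreducible components of the central fiber of }LG(X)\}-1,
\]
and the assertion reduces to $h^{1,N-1}(X)+1=\sharp\{\text{components of the central fiber}\}$. The opening move is therefore to fix a concrete presentation of Givental's Landau--Ginzburg model of $X$ as an explicit Laurent polynomial $f$ on a torus, whose Newton polytope is combinatorially transparent, together with a Calabi--Yau compactification $w\colon Y\to\Aff^1$ of $f$.

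On the Fano side I would compute $h^{1,N-1}(X)$ directly. By the Lefschetz hyperplane theorem the cohomology of $X$ below degree $N$ is inherited from the ambient space, so all nontrivial Hodge data sit in $H^N(X)$; moreover, since $N\ge 3$ one has $1\ne N-1$, and the non-primitive part of $H^N(X)$ lies in bidegree $(N/2,N/2)$, whence $h^{1,N-1}(X)=h^{1,N-1}_{\mathrm{prim}}(X)$. Applying the Griffiths type residue description of the Hodge filtration on the primitive cohomology of a complete intersection then expresses $h^{1,N-1}(X)$ as the dimension of an explicit graded piece of the associated Jacobian ring, yielding a closed combinatorial formula in terms of $N$ and $(d_1,\dots,d_k)$ alone.

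On the Landau--Ginzburg side I would analyse the central fiber of the compactification $Y$. Its fibers are compactified from the affine hypersurfaces $\{f=\lambda\}$, and the degeneration at $\lambda=0$ causes the central fiber to split into toric pieces governed by the faces of the Newton polytope of $f$ that are visible along the boundary added in the compactification. I would count these irreducible components combinatorially, again in terms of $(d_1,\dots,d_k)$. That this count is intrinsic, i.e.\ independent of the particular Calabi--Yau compactification chosen, follows from the birational-in-codimension-one statement recalled in the introduction after~\cite{DHKLP}.

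The proof is completed by matching the two combinatorial answers, that is by proving the identity
\[
\sharp\{\text{components of the central fiber}\}=h^{1,N-1}(X)+1.
\]
I expect the main obstacle to lie on the Landau--Ginzburg side: producing a genuine Calabi--Yau compactification whose central fiber is sufficiently controlled, and counting its components precisely, is considerably more delicate than the Hodge-theoretic computation, which is essentially routine. A closely related difficulty is the final combinatorial identity: establishing in closed form, for arbitrary admissible multidegrees, that the Jacobian-ring dimension computing $h^{1,N-1}(X)$ and the component count differ by exactly one is the technical heart of the argument.
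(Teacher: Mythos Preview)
Your overall strategy is correct and matches the paper: restrict to Fano complete intersections, compute $h^{1,N-1}(X)$ via the Griffiths--Dimca--Nagel description of primitive cohomology as a graded piece of a Jacobian-type ring, count components of the central fiber of a Calabi--Yau compactification of Givental's Landau--Ginzburg model, and match the two combinatorial expressions. You also correctly identify where the weight of the argument lies.

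Two points where your sketch diverges from what actually works. First, the components of the central fiber do \emph{not} arise as ``toric pieces governed by the faces of the Newton polytope.'' The paper instead takes a specific singular projective compactification $LG_s(X)\subset\PP^{d_1-1}\times\cdots\times\PP^{d_k-1}\times\PP^l\times\Aff^1$ and constructs a crepant resolution by an explicit sequence of blow-ups along certain ``special strata'' (loci where one linear form $a_j$ and several coordinate hyperplanes vanish). Almost all of the central-fiber components are \emph{exceptional divisors} of this resolution, and the count is obtained by analysing local models of the form $a_1^{d_1}\cdots a_k^{d_k}=\lambda x_1\cdots x_s$ and summing contributions stratum by stratum; the matching with the Hodge formula then goes through a binomial identity. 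The Newton-polytope picture you describe does not by itself produce the correct count, and making it precise would amount to redoing this resolution analysis.

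Second, the independence of $k_{LG(X)}$ from the choice of Calabi--Yau compactification does not follow from~\cite{DHKLP}, which concerns Fano threefolds of Picard rank one. The paper uses the elementary observation that any two crepant resolutions are dominated by a common one, so the number of components over $\lambda=0$ is intrinsic.
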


%This conjecture can be treated from the following more general point of view
%developed in~\cite{KKP14}.

In~\cite{GKR12}, Conjecture~\ref{conjecture}
is proved for hypersurfaces of general type assuming that
the central fibers of their Landau--Ginzburg models are
semistable, that is they have normal crossing singularities.
Moreover,~\cite{GKR12} also treats the case of a cubic threefold,
that is a simplest case of Fano variety such that
the central fiber of the corresponding Landau--Ginzburg model
is not semistable.

The purpose of this paper is to
prove Conjecture~\ref{conjecture}
for Fano complete intersections
and their (fiberwise compactified)
Givental's Landau--Ginzburg models
(not using the normal crossing condition).

\begin{theorem}[Theorem~\ref{theorem:main}]
\label{theorem:intro}
Let $X$ be a smooth Fano complete intersection of dimension~$N$.
Let $LG(X)$ be a Calabi--Yau compactification
of Givental's Landau--Ginzburg model for~$X$.
Then
$$h^{1,N-1}(X)=k_{LG(X)}$$
if $N>2$, and
$$h^{1,1}(X)=k_{LG(X)}+1$$
if $N=2$.
\end{theorem}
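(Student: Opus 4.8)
The plan is to compute the two quantities $h^{1,N-1}(X)$ and $k_{LG(X)}$ separately and then match them, organising everything around the \emph{primitive} middle cohomology of $X$. The dichotomy between $N>2$ and $N=2$ should come for free from the Lefschetz decomposition $H^{N}(X)=\bigoplus_{j\ge 0}L^{j}H^{N-2j}_{\mathrm{prim}}(X)$, where $L$ is the cup product with the hyperplane class, of Hodge bidegree $(1,1)$. Extracting the $(1,N-1)$-part, only $j=0$ and $j=1$ can contribute, the latter through $L\cdot H^{0,N-2}_{\mathrm{prim}}(X)$. For $N>2$ the group $H^{N-2}(X)$ lies below the middle dimension, so by the Lefschetz hyperplane theorem it agrees with $H^{N-2}(\mathbb{P}^{N+k})$ and its $(0,N-2)$-part vanishes, giving $h^{1,N-1}(X)=h^{1,N-1}_{\mathrm{prim}}(X)$; for $N=2$ the summand $L\cdot H^{0}_{\mathrm{prim}}(X)=L\cdot\mathbb{C}$ is of type $(1,1)$ and supplies the extra unit, so $h^{1,1}(X)=h^{1,1}_{\mathrm{prim}}(X)+1$. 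Hence it is enough to prove the single, $N$-uniform identity $k_{LG(X)}=h^{1,N-1}_{\mathrm{prim}}(X)$.

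Next I would make $h^{1,N-1}_{\mathrm{prim}}(X)$ completely explicit. By the Griffiths--Steenbrink description of the Hodge filtration on the middle primitive cohomology of a smooth complete intersection $X_{d_1,\dots,d_k}\subset\mathbb{P}^{N+k}$, the number $h^{1,N-1}_{\mathrm{prim}}(X)=h^{N-1,1}_{\mathrm{prim}}(X)$ equals the dimension of a single graded component of the associated Jacobian-type ring, i.e. the coefficient of a prescribed monomial in an explicit Poincar\'e-series expression determined by $d_1,\dots,d_k$ and $N$. This produces a closed combinatorial quantity $\nu=\nu(N;d_1,\dots,d_k)$, and the whole theorem is reduced to the equality $k_{LG(X)}=\nu$.

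On the Landau--Ginzburg side I would work with Givental's model in its explicit Laurent-polynomial presentation (following Przyjalkowski), so that $f$ is a concrete Laurent polynomial on a torus $(\mathbb{C}^{*})^{N}$ with a well-understood Newton polytope $\Delta$. I would then construct the Calabi--Yau compactification $\bar f\colon\bar Y\to\Aff^{1}$ by taking fibrewise closures inside a projective toric variety attached to a fan refining the normal fan of $\Delta$, resolved so that $\bar Y$ is smooth and the generic fibre is an anticanonical, hence Calabi--Yau, hypersurface. The two geometric claims to establish are: (i) every fibre of $\bar f$ over a nonzero point of $\Aff^{1}$ is irreducible, so that the only reducible fibre is the central one and $k_{LG(X)}$ equals the number of components of $\bar f^{-1}(0)$ minus one; and (ii) this central fibre consists of one \emph{main} component, the closure of $\{f=0\}\subset(\mathbb{C}^{*})^{N}$, together with further components supported on the toric boundary divisors, whose number can be read off from the faces of $\Delta$. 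Matching this boundary count against the monomial basis of the Jacobian graded piece from the previous step should yield an explicit bijection realising $k_{LG(X)}=\nu$.

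The principal obstacle is precisely (i)--(ii) in the range where $\bar f^{-1}(0)$ is \emph{not} semistable, which is exactly the situation excluded in~\cite{GKR12}: without the normal-crossing hypothesis one cannot feed the central fibre into the spectral sequence of the perverse sheaf of vanishing cycles, and the component count has to be extracted directly from the toric geometry of the compactification, tracking which boundary strata the closure of $\{f=0\}$ sweeps out. I expect most of the effort to go into two verifications: that this toric bookkeeping returns exactly the same number of components as the Griffiths--Steenbrink count of monomials, and that the resolution needed to make $\bar Y$ a genuine Calabi--Yau compactification neither creates nor destroys components of the central fibre and introduces no reducible fibres elsewhere.
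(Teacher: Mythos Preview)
Your reduction on the Hodge side is correct and matches the paper: both you and the authors reduce to $k_{LG(X)}=h^{1,N-1}_{\mathrm{pr}}(X)$ and compute the latter via the Griffiths--Dimca--Nagel Jacobian ring (Theorem~\ref{theorem:middle-Hodge-numbers} and Proposition~\ref{proposition:formula-for-R}).

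The genuine gap is on the Landau--Ginzburg side, specifically your claim (ii) and the last ``verification'' you single out. You expect the extra components of $\bar f^{-1}(0)$ to sit on toric boundary divisors and to be read off from faces of $\Delta$, and you hope the crepant resolution of $\bar Y$ ``neither creates nor destroys components of the central fibre''. In the paper's construction the opposite happens. The authors compactify $f_X$ not torically but as a hypersurface $LG_s(X)$ in the smooth ambient $\PP^{d_1-1}\times\dots\times\PP^{d_k-1}\times\PP^{l}\times\Aff^1$; the central fibre of this \emph{singular} model is irreducible when $i(X)\ge 2$ and has only $k$ components when $i(X)=1$. Essentially \emph{all} of the components counted by $k_{LG(X)}$ are exceptional divisors produced by the crepant resolution of $LG_s(X)$. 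The whole of Section~\ref{section:affine} (Resolution Procedure~\ref{resolution procedure}, Lemmas~\ref{lemma:G-in-terms-of-F}--\ref{lemma:multi-F}, Proposition~\ref{proposition:G}) is devoted to showing that, locally near the relevant strata, $LG_s(X)$ looks like $\{a_1^{d_1}\cdots a_k^{d_k}=\lambda x_1\cdots x_s\}$, constructing an explicit crepant resolution by successive blow-ups of ``canonical'' strata, and counting how many exceptional divisors land in $\{\lambda=0\}$; the global count in the proof of Theorem~\ref{theorem:main} then sums these local contributions and matches them to the binomial formula of Proposition~\ref{proposition:formula-for-R} via Lemma~\ref{lemma:binomial}.

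So your plan would fail at the step where you try to verify that the resolution is component-neutral: it is not, and the real work is precisely to track the exceptional divisors it creates over $\lambda=0$. If you want to salvage a ``boundary/faces of $\Delta$'' interpretation, you would have to explain why a suitable MPCP-type refinement of the normal fan inserts rays in bijection with those exceptional divisors, which is not obvious and is not the route the paper takes.
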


The Hodge number of a smooth complete intersection can be computed combinatorially as a dimension of a
particular component of a particular graded ring, see~\cite{Gr69},~\cite{Di95}, and~\cite{Na97}. On the other
hand, there are Givental's suggestions
for Landau--Ginzburg models (of appropriate dimension) for complete intersections, see~\cite{Gi96}.
They can be birationally rewritten as toric Landau--Ginzburg models. We compute their fiberwise Calabi--Yau
compactifications and compare the number of components of their central fibers of the compactifications with
the Hodge numbers. Note that a general Hironaka-style argument shows that the number of components of the central fiber
does not depend on a choice of a Calabi--Yau compactification.

In practice, we construct Calabi--Yau compactifications of Givental's Landau--Ginzburg models in two steps.
First we find a suitable singular (relative) compactification whose total space is a Calabi--Yau variety. Then
we take a crepant resolution of its singularities. (Recall that a birational morphism $\pi\colon Y\to Z$ of normal $\mathbb{Q}$-Gorenstein varieties is crepant
if~\mbox{$\pi^*(K_Z)\sim K_Y$.})

\begin{remark}
Givental's suggestions for Landau--Ginzburg
models exist and can be rewritten in terms
of Laurent polynomials for smooth
Fano complete intersections of Cartier divisors
in weighted projective spaces (see~\cite[Theorem~2.4]{Prz10}) and in Grassmannians of planes (see~\cite[{Theorem 1.1}]{PSh14b} and~\cite[{Theorem 1}]{PSh14a}). For complete intersections in
weighted projective spaces these weak Landau--Ginzburg models
are toric (see~\cite{ILP13}). According to private communications of A.\,Harder this also holds for complete intersections in
Grassmannians as well,see~\cite{DH15}. For some of them,
say, for threefolds, Theorem~\ref{theorem:intro} holds
(see~\cite[Theorem~23]{Prz13}).
In the index one case the existence of crepant
resolution is proved (see \cite[Remark 2.7]{Prz10}).
It would be interesting to prove an analog of Theorem~\ref{theorem:intro} in a general setup (cf.~\cite[Problem~2.6]{Prz10}).
\end{remark}

\medskip

The paper is organized as follows.
In Section~\ref{section:toric LG}, we recall and discuss
toric Landau--Ginzburg models of complete intersections.
In Section~\ref{section:Hodge numbers},
we express the Hodge number of a complete intersection
in the form suitable for our purposes.
In Section~\ref{section:affine}, we compute
compactifications of toric Landau--Ginzburg models locally,
and find contributions of various strata of singularities to
central fibers of such compactifications.
In Section~\ref{section:projective}, we collect
all such contributions and prove the main theorem of the paper.
%Finally in Section~\ref{section:questions}
%we state some open questions related to the subject.

\smallskip

Unless explicitly stated otherwise, all varieties are assumed to be
smooth, projective and defined over the field~$\CC$ of complex
numbers.

\medskip

The authors are grateful to V.\,Golyshev, D.\,Orlov and Yu.\,Prokhorov for useful discussions,
A.\,Iliev
for pointing out the paper~\cite{Na97}, L.\,Katzarkov and H.\,Ruddat
for an explanation of a conjecture about relation of the Hodge number and the number of components of fibers in higher dimensional case, and T.\,Pantev for an explanation of Hodge numbers duality for Fano variety and
its Landau--Ginzburg model. Special thanks go to the referees who helped to make the paper
more clear and readable.

\section{Toric Landau--Ginzburg models}
\label{section:toric LG}
Complete intersections are the initial and
one of the most studied examples
of Mirror Symmetry correspondence.
In 1990s Givental computed
their Gromov--Witten invariants using
Quantum Lefschetz Theorem (see~\cite{Gi96}).
The so-called \emph{mirror map} is defined via a generating series of their one-pointed Gromov--Witten invariants ($I$-series). Givental in~\cite{Gi96} defined Landau--Ginzburg models for complete intersections.
It turns out that periods of these Landau--Ginzburg models coincide with regularized Givental's $I$-series.
This phenomenon is called \emph{Mirror Symmetry conjecture of variations of Hodge structures}.
Moreover, a reformulation of Givental's models in terms of Laurent polynomials enables one to
relate them to toric degenerations and gives a powerful tools for effective calculations.

Let us give more details. Consider a smooth Fano complete intersection $X\subset \PP^{N+k}$ of hypersurfaces
of degrees $d_1,\ldots,d_k$. Let $i(X)=N+k+1-\sum d_i$. By~\cite{Gi96}, a \emph{regularized generating series of one-pointed Gromov--Witten invariants
with descendants} is a series
$$
I^X_{0}=e^{-\alpha t}\cdot\left(1+\sum_{d>0} (di(X))!\langle\tau_{di(X)-2} \mathbf 1\rangle_{di(X)}
\cdot t^{di(X)}\right),
$$
where $\alpha=d_1!\cdot\ldots\cdot d_k!$ for $i(X)=1$ and $\alpha=0$ otherwise, and
$\mathbf 1$ is dual to a fundamental class of~$X$.
This series uniquely determines first and second Dubrovin's connections
(or \emph{equations of type~DN}, see~\cite{GS07} and~\cite{Prz07}) and
all Gromov--Witten invariants (coming from~$\PP^{N+k}$) of~$X$, see~\cite{KM94} and~\cite{Prz07}.
According to Givental it equals
$$
I^X_{0}=\sum_{d\geqslant 0} \frac{\left(di(X)\right)!\cdot(dd_1)!\cdot\ldots\cdot(dd_k)!}{(d!)^{N+k+1}}t^{di(X)}.
$$

Consider a Laurent polynomial $f$ in $N$ variables $x_1,\ldots,x_N$. Let $\phi_f(i)$
be the constant term (i.\,e., the coefficient at $x_1^0\cdot \ldots
\cdot x_N^0$) of $f^i$, and define \emph{the constant term series} for $f$ by
$$
\Phi_f=\sum_{i=0}^\infty \phi_f(i)\cdot t^i.
$$
It turns out that, under mild conditions, this series is a period of a family of hypersurfaces in a torus given by
$f$, see, for example,~\cite[Proposition~2.3]{Prz08}.

\begin{definition}
A Laurent polynomial $f$ is called \emph{toric Landau--Ginzburg model}
for $X$ if
\begin{itemize}
  \item (Period condition)
$I^X_{0}=\Phi_f$ up to a linear change of variables.
  \item (Calabi--Yau condition)
There exists a fiberwise compactification of a family
$$f\colon (\CC^*)^N\to \CC$$
whose total space is a (noncompact) smooth Calabi--Yau
variety $LG(X)$. Such compactification is called \emph{a Calabi--Yau compactification}.
  \item (Toric condition) There is a %embedded
degeneration $X\rightsquigarrow T$ to a toric variety~$T$
whose fan polytope (i.\,e., the convex hull of integral generators of rays
of a fan) corresponding to $T$ coincides with the Newton polytope (i.\,e., the convex hull of nonzero coefficients) of $f$.
\end{itemize}
\end{definition}

Givental's Landau--Ginzburg models are certain affine varieties. However, they can be birationally rewritten as Laurent
polynomials
$$
f_{X}=\frac{\prod_{i=1}^k(x_{i,1}+\ldots+x_{i,d_i-1}+1)^{d_i}}{\prod_{i=1}^k \prod_{j=1}^{d_i-1} x_{i,j}\prod_{j=1}^{i(X)-1} y_j}+y_1+\ldots+y_{i(X)-1}
$$
(see~\cite[\S 3.2]{Prz13}).

\begin{theorem}[\cite{Prz13} and~\cite{ILP13}]
The polynomial $f_X$ is a toric Landau--Ginzburg model for $X$.
\end{theorem}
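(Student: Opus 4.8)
The plan is to verify the three defining properties of a toric Landau--Ginzburg model one at a time: the period condition, the toric condition, and the Calabi--Yau condition. The first two are combinatorial and can be checked essentially by hand, while the third is geometric and carries the real content.

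For the period condition I would compute the constant term series $\Phi_{f_X}$ directly and match it against Givental's closed form for $I^X_0$. Write $f_X=M+\sum_{j=1}^{i(X)-1}y_j$, where $M$ is the fraction with numerator $\prod_i(x_{i,1}+\ldots+x_{i,d_i-1}+1)^{d_i}$ and denominator $\prod_{i,j}x_{i,j}\cdot\prod_j y_j$, and expand $f_X^m$ by the multinomial theorem. Extracting the constant term in $y_1,\ldots,y_{i(X)-1}$ forces each $y_j$ to be used exactly as many times as $M$ is, so only exponents $m=d\cdot i(X)$ survive, contributing the multinomial coefficient $(d\,i(X))!/(d!)^{i(X)}$. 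For each block $i$, the constant term in $x_{i,1},\ldots,x_{i,d_i-1}$ of $(x_{i,1}+\ldots+x_{i,d_i-1}+1)^{dd_i}\big/\prod_j x_{i,j}^{d}$ is the coefficient of $\prod_j x_{i,j}^{d}$, namely $(dd_i)!/(d!)^{d_i}$. Multiplying these and using $i(X)+\sum_i d_i=N+k+1$ yields $\phi_{f_X}(d\,i(X))=(d\,i(X))!\prod_i(dd_i)!\big/(d!)^{N+k+1}$, so that $\Phi_{f_X}$ agrees with Givental's series term by term; that $\Phi_{f_X}$ is genuinely a period is the input of~\cite[Proposition~2.3]{Prz08}.

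For the toric condition I would identify the Newton polytope $\Delta$ of $f_X$ and exhibit a degeneration $X\rightsquigarrow T$ to the toric variety whose fan polytope equals $\Delta$. This is supplied by the standard toric degenerations of complete intersections attached to nef partitions: the vertices of $\Delta$ are exactly the integral generators of the rays of the fan of $T$, so the fan polytope and the Newton polytope coincide, as in~\cite{ILP13}.

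The main obstacle is the Calabi--Yau condition, namely producing a fiberwise compactification of $f_X\colon(\CC^*)^N\to\CC$ whose total space is a smooth (noncompact) Calabi--Yau variety. Here my plan mirrors the strategy used later in the paper: first construct a relative compactification inside an ambient toric variety determined by $\Delta$ whose total space is Calabi--Yau but possibly singular, and then take a crepant resolution of its singularities. The delicate point is to control the singularities arising along the compactifying boundary so that a crepant resolution exists at all; this is immediate in the index-one case (see~\cite[Remark~2.7]{Prz10}), but in general it requires the local analysis of the boundary strata carried out in Sections~\ref{section:affine} and~\ref{section:projective}. I expect the existence and crepancy of this resolution, rather than the period and toric conditions, to be the hard part.
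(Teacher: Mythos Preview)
The paper does not supply its own proof of this statement: it is quoted from \cite{Prz13} and \cite{ILP13}, and no argument appears in Section~\ref{section:toric LG}. Your outline is therefore an independent sketch rather than something to compare against a proof given here, and as such it is essentially correct and follows the strategy of the cited references.

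Your computation for the period condition is the standard one and is right; the only point you gloss over is the index-one case $i(X)=1$, where there are no variables $y_j$ and the regularized $I$-series carries the extra factor $e^{-\alpha t}$ with $\alpha=\prod_j d_j!$. This discrepancy is precisely what the clause ``up to a linear change of variables'' in the definition is meant to absorb. For the toric condition you correctly defer to the nef-partition degenerations of \cite{ILP13}. For the Calabi--Yau condition your expectation is accurate: although the theorem is formally cited rather than proved in Section~\ref{section:toric LG}, the present paper does construct the required crepant Calabi--Yau compactification as a byproduct of the proof of Theorem~\ref{theorem:main}, via the singular model $LG_s(X)\subset\PP^{d_1-1}\times\cdots\times\PP^{d_k-1}\times\PP^l\times\Aff^1$ together with Resolution Procedure~\ref{resolution procedure}. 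So the hard part of the theorem is effectively re-established in Sections~\ref{section:affine} and~\ref{section:projective}, just as you anticipate.
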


\section{Computing Hodge numbers}
\label{section:Hodge numbers}

Consider a projective space $\PP^{N+k}$ with homogeneous coordinates
$z_1,\ldots,z_{N+k+1}$. Let~\mbox{$f_1,\ldots, f_k$} be homogeneous
polynomials of degrees $d_1,\ldots, d_k$
in $z_1,\ldots,z_{N+k+1}$.
Let
$$
F=F(f_1,\ldots,f_k)=
w_1f_1+\ldots+w_kf_k\in S=\CC[z_1,\ldots,z_{N+k+1}, w_1,\ldots,w_k].
$$
Denote an ideal in $S$ generated by
$$
\frac{\partial F}{\partial w_1},\ldots, \frac{\partial F}{\partial w_k},
\frac{\partial F}{\partial z_1},\ldots,\frac{\partial F}{\partial z_{N+k+1}}
$$
by $J(F)$. Put
$$R=R(f_1,\ldots,f_k)=S/J(F).$$
This ring is bigraded by $\deg (z_s)=(0,1)$ and $\deg(w_j)=(1,-d_j)$.

Although our main character will be (the graded components of)
the ring $R$, for some computations we will need another auxiliary
ring.
Let $J'(F)\subset J(F)$ be an ideal in $S$ generated by
$$\frac{\partial F}
{\partial w_1},\ldots,
\frac{\partial F}{\partial w_{k}}.
$$
Put $$R'=R'(f_1,\ldots,f_k)=S/J'(F),$$ and consider a bigrading on the ring $R'$
given by $\deg (z_s)=(0,1)$ and $\deg(w_j)=(1,-d_j)$.
Then $R$ is a quotient of $R'$ by the ideal $\widehat{J}(F)\subset R'$ generated by the remaining
partial derivatives
$$
\frac{\partial F}{\partial z_1},\ldots,
\frac{\partial F}{\partial z_{N+k+1}},
$$
and the natural homomorphism $R'\to R$ respects the grading.

\begin{lemma}\label{lemma:dim-R-prime-constant}
The graded vector space $R'$ depends only on $d_i$'s
provided that
$f_i$'s define a complete intersection (in a scheme-theoretic sense).
\end{lemma}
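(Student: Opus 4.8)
The plan is to recognize $R'$ as a complete intersection ring tensored with a polynomial ring, and then to read off its bigraded dimensions from the (entirely combinatorial) Hilbert series of such a ring. The starting observation is that $\partial F/\partial w_j = f_j$, so that $J'(F)=(f_1,\ldots,f_k)$ and hence
$$
R' = \CC[z_1,\ldots,z_{N+k+1},w_1,\ldots,w_k]/(f_1,\ldots,f_k).
$$
Since none of the $f_i$ involves the variables $w_j$, this ring splits as a tensor product
$$
R' \cong A \otimes_{\CC} \CC[w_1,\ldots,w_k], \qquad A = \CC[z_1,\ldots,z_{N+k+1}]/(f_1,\ldots,f_k),
$$
where the bigrading is induced by $\deg(z_s)=(0,1)$ and $\deg(w_j)=(1,-d_j)$; in particular all of $A$ sits in bidegrees of the form $(0,\ast)$, and each $f_i$ has bidegree $(0,d_i)$.

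Next I would invoke the complete intersection hypothesis in the form it is actually needed: asking the $f_i$ to cut out a scheme-theoretic complete intersection is exactly asking $f_1,\ldots,f_k$ to be a \emph{regular sequence} in $\CC[z_1,\ldots,z_{N+k+1}]$, this being the statement that the vanishing locus has the expected codimension $k$ in the Cohen--Macaulay ring $\CC[z]$. For a regular sequence of homogeneous elements the Koszul complex is a graded free resolution of $A$, and the resulting Hilbert series is the standard one,
$$
H_A(t) = \frac{\prod_{i=1}^{k}(1 - t^{d_i})}{(1-t)^{N+k+1}},
$$
which manifestly depends only on the degrees $d_1,\ldots,d_k$ (and on $N$ and $k$, which are fixed throughout). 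The crucial point is that this formula is insensitive to the particular choice of the $f_i$: two different regular sequences of the same degrees produce rings $A$ with identical Hilbert series.

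It then remains to assemble the bigraded Hilbert series of $R'$. With $s$ tracking the first grading and $t$ the second, the polynomial factor $\CC[w_1,\ldots,w_k]$ contributes $\prod_{j=1}^{k}(1-s\,t^{-d_j})^{-1}$, so that
$$
H_{R'}(s,t) = \frac{\prod_{i=1}^{k}(1 - t^{d_i})}{(1-t)^{N+k+1}}\cdot \prod_{j=1}^{k}\frac{1}{1 - s\,t^{-d_j}}.
$$
Every ingredient on the right-hand side is determined by $d_1,\ldots,d_k$, hence so are all the bigraded dimensions of $R'$, which is precisely the assertion that $R'$ depends only on the $d_i$ as a bigraded vector space.

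I do not expect a serious obstacle here. The only points requiring care are bookkeeping ones: first, translating the scheme-theoretic complete intersection condition into the regular-sequence statement (via the codimension computation in the polynomial ring), and second, checking that passing from $\CC[z]$ to the faithfully flat extension $\CC[z][w]=S$ preserves regularity of the sequence, so that the Koszul resolution, and therefore the Hilbert-series computation, is legitimate on all of $S$. Once these are in place the result is an immediate consequence of the fact that a complete intersection ring has the \emph{generic} Hilbert series dictated by the degrees alone.
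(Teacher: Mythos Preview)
Your argument is correct. The paper's own proof is the single sentence ``Induction in $k$'', which is really the step-by-step version of the same idea: the complete intersection hypothesis makes $f_1,\ldots,f_k$ a regular sequence in $\CC[z_1,\ldots,z_{N+k+1}]$, and adjoining each $f_j$ in turn multiplies the Hilbert series by $(1-t^{d_j})$. Your Koszul/Hilbert-series formulation packages this induction into a closed formula in one stroke, and has the small advantage of producing the explicit bigraded Hilbert series of $R'$, which makes the later dimension counts (e.g., in Proposition~\ref{proposition:formula-for-R}) more transparent. The two approaches are equivalent at the level of content; yours is simply the unfolded, explicit version of what the paper leaves implicit.
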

\begin{proof}
Induction in $k$.
\end{proof}

\begin{remark}
The assumption of Lemma~\ref{lemma:dim-R-prime-constant}
holds, in particular, for the collection of polynomials
$f_1=z_1^{d_1}, \ldots, f_k=z_k^{d_k}$.
\end{remark}

\begin{remark}
Note that the direct analog of Lemma~\ref{lemma:dim-R-prime-constant}
fails for the ring $R$
(cf. the proof of Lemma~\ref{lemma:R-vs-R-prime}). For example, if $k=1$ and
$$f_1=z_1^{N+1}+\ldots+z_{N+2}^{N+1},$$
then
$$\dim\big(R_{1, -1}(f_1)\big)={2N+1\choose N+1}-N-2,$$
while
$$\dim\big(R_{1, -1}(z_1^{N+1})\big)={2N+1\choose N+1}-1.$$
\end{remark}

From now on, we assume that $X$ is a smooth
Fano complete intersection of the hypersurfaces
defined by the polynomials $f_i$.
Let $h_{pr}^{N-p,p}(X)$ be primitive
middle Hodge numbers of~$X$. In our case, we have $$h_{pr}^{N-p,p}(X)=h^{N-p,p}(X)$$ for $2p\neq N$ and
$$h_{pr}^{N-p,p}(X)=h^{N-p,p}(X)-1$$ otherwise.

%\begin{remark}
%\label{remark:primitive}
%One has
%$$h_{pr}^{N-p,p}(X)=h^{N-p,p}(X),$$
%with the only exception
%$$h_{pr}^{k,k}(X)=h^{k,k}(X)-1$$
%for $N=2k$.
%\end{remark}
Let
$$i(X)=N+k+1-\sum\limits_{t=1}^k d_t\ge 1$$
denote the index of $X$.

The middle Hodge numbers of $X$ can be computed via the dimensions of the graded components of the ring $R$.

\begin{theorem}[{see~\cite{Di95},~\cite{Gr69},~\cite[{Proposition~2.16}]{Na97}}]
\label{theorem:middle-Hodge-numbers}
One has
$$
h_{pr}^{N-p,p}(X)=\dim\big(R_{p,-i(X)}\big).
$$
\end{theorem}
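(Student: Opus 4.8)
The plan is to reduce the statement to the classical case of a single hypersurface via the Cayley trick and then apply Griffiths' residue calculus, tracking the bigrading throughout. First I would read the bigraded ring $S$ as the Cox ring of the smooth toric variety $\PP = \PP_{\PP^{N+k}}\big(\O(-d_1)\oplus\cdots\oplus\O(-d_k)\big)$, a $\PP^{k-1}$-bundle over $\PP^{N+k}$ of dimension $N+2k-1$, whose Picard group $\ZZ^2$ is spanned by the two grading directions $\deg(z_s)=(0,1)$ and $\deg(w_j)=(1,-d_j)$. Under this identification $F=\sum_j w_jf_j$ is bihomogeneous of bidegree $(1,0)$ and defines a hypersurface $Y=\{F=0\}\subset\PP$, and $R=S/J(F)$ is its Jacobian ring. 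I would first check that $Y$ is smooth: a point of $Y$ can be singular only over $X$, since $\partial F/\partial w_{j}=f_{j}$ is nonzero wherever some $f_j\neq0$; and over $X$ the equations $\partial F/\partial z_s=\sum_j w_j\,\partial f_j/\partial z_s=0$ say that the vector $(w_1,\ldots,w_k)$ lies in the kernel of the Jacobian matrix of the $f_j$, which has full rank because $X$ is smooth, forcing $w=0$ and landing in the irrelevant locus.

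With $Y$ in hand I would invoke two ingredients. The Cayley trick identifies the primitive middle cohomology of $Y$ with that of $X$ up to a Tate twist: the projection $Y\to\PP^{N+k}$ has fiber $\PP^{k-2}$ over the complement of $X$ and $\PP^{k-1}$ over $X$, and a Leray spectral sequence argument produces an isomorphism of Hodge structures $H^{N}_{pr}(X)\cong H^{N+2(k-1)}_{pr}(Y)(k-1)$, so that $H^{N-p,p}_{pr}(X)\cong H^{N+k-1-p,\,p+k-1}_{pr}(Y)$. In parallel, the toric analogue of Griffiths' residue calculus (see~\cite{Gr69},~\cite{Di95}) expresses each primitive Hodge piece $H^{n-1-q,q}_{pr}(Y)$, where $n=\dim\PP=N+2k-1$, as the graded component of $R$ in the bidegree $(q+1)[Y]+K_\PP$.

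The remaining step, which I expect to be the crux, is the grading bookkeeping that makes the index $i(X)$ appear. The anticanonical class of $\PP$ is the sum of the bidegrees of the Cox generators, $-K_\PP=(N+k+1)(0,1)+\sum_j(1,-d_j)=\big(k,\;N+k+1-\sum_j d_j\big)=(k,i(X))$, so $K_\PP=(-k,-i(X))$; together with $[Y]=(1,0)$ this turns the Griffiths bidegree into $(q+1)(1,0)+(-k,-i(X))=(q+1-k,\,-i(X))$. Substituting the value $q=p+k-1$ dictated by the Cayley twist collapses the first coordinate to $p$ and yields exactly $R_{p,-i(X)}$, as claimed. The delicate points are to pin down the precise weight of the Tate twist and the compatibility of the Griffiths residue map with the $\ZZ^2$-grading in the mildly nonstandard setting of a projective bundle rather than $\PP^{n}$; this is where I would argue most carefully so that the second coordinate of the bidegree stabilizes to $-i(X)$ independently of $p$.
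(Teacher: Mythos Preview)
The paper does not give its own proof here; the theorem is quoted from \cite{Gr69}, \cite{Di95}, and \cite[Proposition~2.16]{Na97} and used as a black box. Your Cayley-trick outline is a correct route to it and is essentially the structure underlying those references: Dimca and Nagel set up a residue calculus directly for the complete intersection, which unwinds to Griffiths' hypersurface theory applied to $Y=\{F=0\}$ in the projective bundle. The bookkeeping checks out --- the Cox-ring identification, the smoothness argument for $Y$, the Tate twist $H^{N}_{pr}(X)\cong H^{N+2k-2}_{pr}(Y)(k-1)$, and the canonical class $K_\PP=(-k,-i(X))$ combine to give the bidegree $(q+1-k,-i(X))=(p,-i(X))$ exactly as you wrote.

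One remark on your final caveat. With the chosen grading one has $\pi_*\O(1,0)=\bigoplus_j\O(d_j)$, so in Grothendieck's convention the ambient space is $\PP\big(\bigoplus_j\O(d_j)\big)$ rather than $\PP\big(\bigoplus_j\O(-d_j)\big)$; these are isomorphic as varieties, but the point is that $[Y]=\O(1,0)$ is then genuinely ample (a direct sum of ample line bundles is an ample vector bundle). Hence the toric Griffiths theorem of Batyrev--Cox applies on the nose, and your worry about the ``mildly nonstandard setting'' can be dropped.
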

One can also find another formula for Hodge numbers (due to Hirzebruch)
in~\cite[{Theorem 22.1.1}]{Hi66}. %Exp. XI, Theorem 2.2]{SGA7}.

Theorem~\ref{theorem:middle-Hodge-numbers}
enables us to give an explicit
formula for the middle Hodge number~\mbox{$h_{pr}^{1,N-1}(X)$}.

\begin{lemma}\label{lemma:R-vs-R-prime}
Suppose that $i(X)\ge 2$.
Then
$$\dim(R_{1,-i(X)})=\dim(R'_{1,-i(X)}).$$
Furthermore, if $i(X)=1$, one has
$$\dim(R_{1,-1})=\dim(R'_{1,-1})-(N+k+1).$$
\end{lemma}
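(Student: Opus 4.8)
The plan is to use the graded surjection $R'\to R$ whose kernel is $\widehat{J}(F)$, so that in each bidegree $\dim(R_{a,b})=\dim(R'_{a,b})-\dim(\widehat{J}(F)_{a,b})$; thus everything reduces to computing $\dim(\widehat{J}(F)_{1,-i(X)})$. Since $F$ has bidegree $(1,0)$, each generator $\partial F/\partial z_s=\sum_j w_j\,\partial f_j/\partial z_s$ of $\widehat{J}(F)$ has bidegree $(1,-1)$, and a product of two generators already has first grading $2$; hence any element of $\widehat{J}(F)$ of first grading $1$ is a combination $\sum_s g_s\,\partial F/\partial z_s$ with $g_s\in R'_{0,\ast}=\CC[z_1,\ldots,z_{N+k+1}]$. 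For $i(X)\ge 2$ the coefficients $g_s$ would have to lie in bidegree $(0,1-i(X))$ with $1-i(X)<0$, so they vanish and $\widehat{J}(F)_{1,-i(X)}=0$; this gives the first equality immediately.

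When $i(X)=1$ the coefficients $g_s$ lie in bidegree $(0,0)$, i.e. are scalars, so $\widehat{J}(F)_{1,-1}$ is the linear span of the images of the $N+k+1$ elements $\partial F/\partial z_s$ in $R'$. The desired equality $\dim(R_{1,-1})=\dim(R'_{1,-1})-(N+k+1)$ is therefore equivalent to the linear independence of these images, that is, to the implication $\sum_s c_s\,\partial F/\partial z_s\in J'(F)\ \Rightarrow\ c=0$. I would unwind this by comparing, inside the bidegree $(1,-1)$ part of $J'(F)=(f_1,\ldots,f_k)$, the coefficients of the $w_l$: writing $D_c=\sum_s c_s\,\partial/\partial z_s$ for the directional derivative, the membership above is equivalent to $D_c f_l\in(f_1,\ldots,f_k)$ for every $l$.

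The main obstacle is to rule this out for $c\ne 0$, and it is the only step using smoothness of $X$; I would argue on the affine cone $C(X)=V(f_1,\ldots,f_k)\subset\Aff^{N+k+1}$, which is reduced and smooth away from its vertex $0$. Differentiating the relations repeatedly gives $D_c^m f_l\in(f_1,\ldots,f_k)$ for all $m\ge 1$, so the ideal $(f_1,\ldots,f_k)$ is invariant under the translation $z\mapsto z+tc$, and hence so is $C(X)$. Since $0\in C(X)$, taking $t=1$ shows $c\in C(X)$, and the translation $z\mapsto z-c$ is then an automorphism of $C(X)$ sending the smooth point $c$ to the singular vertex $0$---singular because, assuming as one may that all $d_i\ge 2$, the Zariski tangent space at $0$ is the whole of $\Aff^{N+k+1}$, of dimension $N+k+1>N+1=\dim C(X)$. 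This contradiction forces $c=0$, proving the independence and with it the second formula.
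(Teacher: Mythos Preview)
Your proof is correct. One small slip: $R'_{0,\ast}$ is $\CC[z_1,\ldots,z_{N+k+1}]/(f_1,\ldots,f_k)$, not $\CC[z_1,\ldots,z_{N+k+1}]$; but since the $f_j$ have positive degree this does not affect the graded pieces you use (negative degrees vanish, degree $0$ is $\CC$), so the argument goes through unchanged.

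For $i(X)\ge 2$ your argument is essentially the paper's. For $i(X)=1$, however, you take a genuinely different route. The paper does not prove linear independence of the $F_{z_s}$ directly for the given $f_i$'s; instead it observes, using Lemma~\ref{lemma:dim-R-prime-constant}, deformation invariance of Hodge numbers, and Theorem~\ref{theorem:middle-Hodge-numbers}, that the defect $\delta=\dim(R'_{1,-1})-\dim(R_{1,-1})$ depends only on the degrees, and then computes $\delta$ for the special choice $f_1=\sum_s z_s^{d_1}$ by a one-line degree argument. Your approach is direct and geometric: from $D_c f_l\in(f_1,\ldots,f_k)$ you deduce translation invariance of the affine cone $C(X)$ and derive a contradiction by moving the singular vertex to a smooth point. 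This is self-contained and avoids appealing to Theorem~\ref{theorem:middle-Hodge-numbers} (which is, after all, what the lemma is meant to exploit), at the cost of a slightly more sophisticated geometric step. Both arguments share the implicit hypothesis that all $d_i\ge 2$, which you make explicit; note that the paper's Fermat computation also needs $d_1\ge 2$ for the monomials $z_s^{d_1-1}$ to be linearly independent.
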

\begin{proof}
The ideal $\widehat{J}(F)\subset S'$ is generated by its homogenous component of degree $(1,-1)$. Thus, if $\widehat{J}(F)_{p,q}$ is a nontrivial
homogenous component of $\widehat{J}(F)$ of degree $(p,q)$ then either $p\ge 2$ or $q=-1$. This means that for any $i\ge 2$
one has $\widehat{J}(F)_{1,-i}=0$, so that $R'_{1,-i}$ is isomorphically projected on to $R_{1,-i}$.

To prove the second assertion, note that
the derivatives
$$F_{z_s}=\frac{\partial F}{\partial z_s}\in R'$$
have degrees
$$\deg(F_{z_s})=\deg(F)-\deg(z_s)=
(1,0)-(0,1)=(1,-1).$$
Therefore, the difference
$$\delta(f_1,\ldots,f_k)=\dim\big(R'_{1,-1}(f_1,\ldots,f_n)\big)-
\dim\big(R_{1,-1}(f_1,\ldots,f_n)\big)$$
equals the dimension of the subspace of
$R'_{1,-1}(f_1,\ldots,f_n)$ spanned by the
polynomials~$F_{z_s}$.
By Lemma~\ref{lemma:dim-R-prime-constant},
the dimension $\dim\big(R'_{1,-1}(f_1,\ldots,f_n)\big)$
does not depend on $f_1, \ldots, f_k$ (provided
that the corresponding variety is a complete intersection).
Similarly, the Hodge numbers of a smooth complete intersection
also do not depend on $f_1, \ldots, f_k$.
Thus, Theorem~\ref{theorem:middle-Hodge-numbers} implies that
to compute $\delta(f_1,\ldots,f_k)$ we may choose
the polynomials $f_1,\ldots,f_k$ as we want provided that the complete intersection remains smooth.

Suppose that $d_1$ is minimal
among the degrees $d_j$,
choose
$$f_1=z_1^{d_1}+\ldots+z_{N+k+1}^{d_1},$$
and choose $f_2,\ldots, f_k$ so that
the variety defined by the equations $f_1=\ldots=f_k=0$
is a smooth complete intersection of the hypersurfaces
$f_j=0$ (actually, the latter assumption will not be used at all).
We claim that
the polynomials $$F_{z_s}\in R'_{1,-1}(f_1,\ldots,f_k)$$ are linearly
independent.
Suppose that they are not, that is, for some
$\lambda_1,\ldots,\lambda_{N+k+1}\in\CC$ one has
$$\sum\limits_{s=1}^{N+k+1}\lambda_s F_{z_s}=0\in R'_{1,-1}(f_1,\ldots,f_n).$$
Taking a coefficient at $w_1$, we obtain
$$
\sum\limits_{s=1}^{N+k+1}\lambda_s \frac{\partial f_1}{\partial z_s}
=0\in R'_{1,-1}(f_1,\ldots,f_n).$$
Since
$$\deg\Big(\frac{\partial f_1}{\partial z_s}\Big)=d_1-1<d_1=
\min\{\deg(f_i)\},$$
we conclude that
$$\sum\limits_{s=1}^{N+k+1}\lambda_s \frac{\partial f_1}{\partial z_s}
=0$$
in $\CC[z_1,\ldots,z_{N+k+1}]$.
After a substitution
$$
f_1=z_1^{d_1}+\ldots+z_{N+k+1}^{d_1}
$$
we end up with the equality
$$\sum\limits_{s=1}^{N+k+1}\lambda_s z_s^{d_1-1}=0$$
in $\CC[z_1,\ldots,z_{N+k+1}]$, which gives a contradiction.
\end{proof}

\begin{proposition}\label{proposition:formula-for-R}
If $i(X)\ge 2$, then
\begin{equation*}%\label{eq:R1dX-l-nonzero}
\dim\big(R_{1,-i(X)}\big)=
\sum\limits_{j=1}^k\sum\limits_{I\subset I_k}(-1)^{k-|I|}
{\left(\sum_{s\in I} d_s\right)+d_j-1\choose
N+k}.
\end{equation*}

If $i(X)=1$, then
\begin{equation*}%\label{eq:R1dX-l-0}
\dim\big(R_{1,-i(X)}\big)=
-(N+k+1)+\sum\limits_{j=1}^k\sum\limits_{I\subset I_k}(-1)^{k-|I|}
{\left(\sum_{s\in I} d_s\right)+d_j-1\choose
N+k}.
\end{equation*}
\end{proposition}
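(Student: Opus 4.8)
The plan is to reduce the computation of $\dim(R_{1,-i(X)})$ to a Hilbert--series count for the auxiliary ring $R'$, and then to a routine inclusion--exclusion. By Lemma~\ref{lemma:R-vs-R-prime}, for $i(X)\ge 2$ one has $\dim(R_{1,-i(X)})=\dim(R'_{1,-i(X)})$, while for $i(X)=1$ one has $\dim(R_{1,-1})=\dim(R'_{1,-1})-(N+k+1)$. Thus the two displayed formulas differ exactly by the correction term $-(N+k+1)$, and it suffices to establish, for all $i(X)\ge 1$, the single identity
$$\dim(R'_{1,-i(X)})=\sum_{j=1}^k\sum_{I\subseteq I_k}(-1)^{k-|I|}\binom{\left(\sum_{s\in I}d_s\right)+d_j-1}{N+k}.$$

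The key structural observation is that $R'$ is very simple. Since $\partial F/\partial w_j=f_j$ involves only the variables $z_s$, the ideal $J'(F)$ is generated by $f_1,\dots,f_k\in\CC[z_1,\dots,z_{N+k+1}]$, so $R'\cong B[w_1,\dots,w_k]$ with $B=\CC[z_1,\dots,z_{N+k+1}]/(f_1,\dots,f_k)$. Under the bigrading the first degree counts the total $w$-degree, so the component of first degree $1$ is $R'_{1,\ast}=\bigoplus_{i=1}^k w_i\cdot B$; as $w_i$ carries bidegree $(1,-d_i)$, intersecting with second degree $-i(X)$ picks out the piece of $B$ in $z$-degree $d_i-i(X)$. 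Hence $R'_{1,-i(X)}=\bigoplus_{i=1}^k w_i\cdot B_{d_i-i(X)}$, and therefore $\dim(R'_{1,-i(X)})=\sum_{i=1}^k\dim(B_{d_i-i(X)})$.

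It then remains only to compute $\dim(B_m)$. Since the $f_i$ form a regular sequence (or, by Lemma~\ref{lemma:dim-R-prime-constant}, after choosing $f_i=z_i^{d_i}$), the Hilbert series of $B$ is $\prod_{j=1}^k(1-t^{d_j})/(1-t)^{N+k+1}$. Expanding the numerator by inclusion--exclusion and using $1/(1-t)^{N+k+1}=\sum_{m\ge 0}\binom{m+N+k}{N+k}t^m$ gives
$$\dim(B_m)=\sum_{I\subseteq I_k}(-1)^{|I|}\binom{m-\sum_{s\in I}d_s+N+k}{N+k},$$
valid for every integer $m$ with the usual convention that a binomial coefficient vanishes whenever its top entry is less than $N+k$ (this automatically gives $\dim(B_m)=0$ for $m<0$, which is exactly the case $d_j<i(X)$). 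Substituting $m=d_j-i(X)$ and using the identity $N+k-i(X)=\sum_{t=1}^k d_t-1$, the top entry becomes $d_j-1+\sum_{s\notin I}d_s$; replacing $I$ by its complement $I_k\setminus I$, which turns $(-1)^{|I|}$ into $(-1)^{k-|I|}$, brings each summand into the claimed shape. Summing over $j$ settles the case $i(X)\ge 2$, and adding the correction $-(N+k+1)$ from Lemma~\ref{lemma:R-vs-R-prime} settles the case $i(X)=1$.

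The genuinely routine part is the final binomial bookkeeping; the one point that demands care is the reindexing by complementary subsets together with the identity $N+k-i(X)=\sum d_t-1$, which is what converts the naturally arising expression for $\dim(B_{d_j-i(X)})$ into the precise form stated in the proposition. I expect no deeper obstacle: once $R'$ is recognized as the polynomial ring $B[w_1,\dots,w_k]$ over the complete-intersection quotient $B$, the whole statement collapses to a single Hilbert-series identity.
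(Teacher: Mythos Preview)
Your proof is correct and follows essentially the same approach as the paper: both reduce to $R'$ via Lemma~\ref{lemma:R-vs-R-prime}, identify $R'_{1,-i(X)}$ with $\bigoplus_j w_j B_{d_j-i(X)}$ for $B=\CC[z]/(f_1,\dots,f_k)$, and compute $\dim B_m$ by inclusion--exclusion (the paper via the specialization $f_i=z_i^{d_i}$ from Lemma~\ref{lemma:dim-R-prime-constant}, you via the Hilbert series of a regular sequence, which is the same computation). The only cosmetic difference is that you package the count as a Hilbert-series identity, whereas the paper phrases it as counting polynomials not divisible by any $f_t$.
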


\begin{proof}
Let $\Delta(d, m)$ denote the dimension
of the vector space of homogeneous polynomials of degree $d$ in $m$
variables. Then
$$\Delta(d,m)={d+m-1\choose m-1}.$$

To start with, let us compute the dimension $\Delta_j$ of the vector space
of homogeneous polynomials in the variables $z_1,\ldots, z_{N+k+1}$
of degree $d_j-i(X)$ that are not divisible by any of
the polynomials $f_t$, $1\le t\le k$. Put $I_k=\{1,\ldots, k\}$.
One has
\begin{multline*}
\Delta_j=
\sum\limits_{I\subset I_k}(-1)^{|I|}
\Delta\left(d_j-i(X)-\left(\sum_{s\in I} d_s\right), N+k+1\right)=\\=
\sum\limits_{I\subset I_k}(-1)^{|I|}
{\left(\sum_{s\in I_k\setminus I} d_s\right)+d_j-1\choose
N+k}=\\=
\sum\limits_{I\subset I_k}(-1)^{k-|I|}
{\left(\sum_{s\in I} d_s\right)+d_j-1\choose
N+k}.
\end{multline*}

Recall that
$$\dim\big(R'_{1,-i(X)}(f_1,\ldots,f_k)\big)=
\dim\big(R'_{1,-i(X)}(z_1^{d_1},\ldots, z_k^{d_k})\big)$$
by Lemma~\ref{lemma:dim-R-prime-constant}.
On the other hand, it is straightforward to see
that
$$\dim\big(R'_{1,-i(X)}(z_1^{d_1},\ldots, z_k^{d_k})\big)=
\sum\limits_{j=1}^k \Delta_j.$$
Finally, by Lemma~\ref{lemma:R-vs-R-prime},
one has
$$\dim\big(R_{1,-i(X)}(f_1,\ldots,f_k)\big)=
\dim\big(R'_{1,-i(X)}(f_1,\ldots,f_k)\big),$$
if $i(X)\ge 2$, and
$$\dim\big(R_{1,-i(X)}(f_1,\ldots,f_k)\big)=
\dim\big(R'_{1,-i(X)}(f_1,\ldots,f_k)\big)-(N+k+1),$$
if $i(X)=1$, which completes the proof.
\end{proof}

\begin{corollary}
\label{corollary:Hodge-number-hypersurface}
Let $k=1$.
Then
$$
h_{pr}^{1,N-1}(X)={2d-1\choose N+1}
$$
if $d\leqslant N$ and
$$
h_{pr}^{1,N-1}(X)={2N+1\choose N+1}-N-2
$$
if $d=N+1$.
\end{corollary}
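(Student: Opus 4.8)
The plan is to specialize Proposition~\ref{proposition:formula-for-R} to the hypersurface case $k=1$ and then simplify. First I would record the index: with $k=1$ and $d=d_1$ one has $i(X)=N+k+1-d=N+2-d$, so the hypothesis $i(X)\ge 2$ of the first part of the corollary becomes $d\le N$, while $i(X)=1$ becomes $d=N+1$. By Theorem~\ref{theorem:middle-Hodge-numbers}, the quantity to be computed is $h_{pr}^{1,N-1}(X)=\dim\big(R_{1,-i(X)}\big)$, which is exactly what Proposition~\ref{proposition:formula-for-R} evaluates.

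Next I would evaluate the combinatorial sum appearing in Proposition~\ref{proposition:formula-for-R} at $k=1$. Here $I_k=\{1\}$, so the outer sum over $j$ has the single term $j=1$ with $d_j=d$, and the inner sum runs over the two subsets $I=\emptyset$ and $I=\{1\}$ of $I_k$. The subset $I=\emptyset$ contributes $(-1)^{1-0}\binom{d-1}{N+1}=-\binom{d-1}{N+1}$, and the subset $I=\{1\}$ contributes $(-1)^{1-1}\binom{2d-1}{N+1}=\binom{2d-1}{N+1}$. Thus $\dim\big(R_{1,-i(X)}\big)=\binom{2d-1}{N+1}-\binom{d-1}{N+1}$ when $i(X)\ge 2$, and the same expression diminished by $N+k+1=N+2$ when $i(X)=1$.

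The decisive step is to observe that the correction term $\binom{d-1}{N+1}$ vanishes in both relevant ranges. If $d\le N$ then $d-1<N+1$, and if $d=N+1$ then $d-1=N<N+1$; in either case the upper index of the binomial coefficient is strictly smaller than the lower one, so the term is zero. This leaves $\binom{2d-1}{N+1}$ in the first case, and $\binom{2N+1}{N+1}-N-2$ in the second (using $2d-1=2N+1$ for $d=N+1$), which are precisely the two asserted formulas.

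There is no serious obstacle here: the argument is a direct substitution into the already-established Proposition~\ref{proposition:formula-for-R}, and the only point demanding a moment's attention is the bookkeeping of the index $i(X)$ together with the vanishing of the stray binomial coefficient $\binom{d-1}{N+1}$.
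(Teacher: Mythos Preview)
Your proof is correct and follows exactly the approach indicated in the paper, which simply says to apply Proposition~\ref{proposition:formula-for-R} with $k=1$ and $d_1=d$; you have merely written out the specialization in full detail, including the key observation that $\binom{d-1}{N+1}=0$ in both cases.
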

\begin{proof}
Apply Proposition~\ref{proposition:formula-for-R} for $k=1$ and $d_1=d$.
\end{proof}

The formulas from Proposition~\ref{proposition:formula-for-R}
are stated in the
form that enables us to prove our main result.
On the other hand, it may be not really convenient for
computations. For this purpose, one can use the
following formulas (that we will not need in our proof).

\begin{proposition}
One has\footnote{In the published version of this paper there is a misprint in summation indices here.}
\begin{equation*}%\label{eq:R1d-simple-l-nonzero}
\dim\big(R_{1,-i(X)}\big)=
\sum\limits_{j=1}^k\sum\limits_{i_1=0}^{d_1-1}\dots \sum\limits_{i_k=0}^{d_k-1}
{\sum_{t=1}^k (d_t-i_t)+d_j-k-1\choose N}
\end{equation*}
for $i(X)\ge 2$, and
\begin{equation*}%\label{eq:R1d-simple-l-0}
\dim\big(R_{1,-i(X)}\big)=
-(N+k+1)+
\sum\limits_{j=1}^k\sum\limits_{i_1=0}^{d_1-1}\dots \sum\limits_{i_k=0}^{d_k-1}
{\sum_{t=1}^k (d_t-i_t)+d_j-k-1\choose N}
\end{equation*}
for $i(X)=1$.
\end{proposition}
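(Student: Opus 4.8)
The plan is to derive the alternative formulas by transforming each binomial term in Proposition~\ref{proposition:formula-for-R}. The starting point is the inclusion-exclusion sum over subsets $I\subset I_k$. I would first recognize that the sum
\[
\sum_{I\subset I_k}(-1)^{k-|I|}{\left(\sum_{s\in I}d_s\right)+d_j-1\choose N+k}
\]
has the structure of a $k$-fold finite difference applied to the function $d\mapsto {d+d_j-1\choose N+k}$, evaluated by switching each $d_s$ on or off. The key algebraic identity I would invoke is the expansion of a single binomial coefficient as a sum: for a nonnegative integer $d_s$, one has ${a+d_s\choose m}=\sum_{i_s=0}^{d_s}{a+d_s-i_s\choose m-1}$ telescoping-style, or more directly the hockey-stick/Vandermonde-type identity ${a+M\choose N+k}=\sum_{i=0}^{M}{a+M-i\choose N+k-1}$. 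The goal of these manipulations is to absorb the alternating signs and collapse the ${N+k\choose\,}$ into an ${N\choose\,}$ by peeling off $k$ layers of summation, one for each index $i_t$ running from $0$ to $d_t$.

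Concretely, I would proceed by induction on $k$, showing at each step that summing over one subset-index $I$ and reindexing via a new summation variable $i_t\in\{0,\dots,d_t\}$ reduces the lower binomial parameter $N+k$ by one and removes one factor of the alternating sign. After processing all $k$ indices, the binomial bottom becomes $N+k-k=N$, and the top, which starts as $\left(\sum_{s\in I}d_s\right)+d_j-1$, accumulates the shifts $-i_t$ and the constant corrections from each application of the identity, landing on $\sum_{t=1}^{k}(d_t-i_t)+d_j-k-1$. The additive constant $-(N+k+1)$ in the $i(X)=1$ case is inert throughout, so it simply carries over verbatim, exactly matching Proposition~\ref{proposition:formula-for-R}.

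The main obstacle I anticipate is bookkeeping the constant shifts in the top argument of the binomial coefficient: each application of the summation identity contributes a $-1$ to the upper index (since ${a+d_s\choose m}$ expands into terms ${a+d_s-i_s\choose m-1}$ with the upper bound of $i_s$ needing care so that no spurious negative-argument binomials appear), and verifying that exactly $k$ such shifts accumulate to give the $-k-1$ offset requires attention. A subtlety is ensuring the ranges $i_t\in\{0,\dots,d_t\}$ are the correct ones and that terms with negative upper binomial arguments vanish, which legitimizes extending or truncating the summation bounds as needed. Once the combinatorial identity is set up cleanly, the two cases $i(X)\ge 2$ and $i(X)=1$ follow uniformly since they differ only by the additive constant, and the transformation itself does not interact with the smoothness or complete-intersection hypotheses beyond what Proposition~\ref{proposition:formula-for-R} already guarantees.
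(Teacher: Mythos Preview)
Your route is genuinely different from the paper's. The paper does \emph{not} transform the inclusion--exclusion formula of Proposition~\ref{proposition:formula-for-R}; instead it recomputes $\dim R'_{1,-i(X)}$ from scratch by invoking Lemma~\ref{lemma:dim-R-prime-constant} to replace $(f_1,\ldots,f_k)$ by $(z_1^{d_1},\ldots,z_k^{d_k})$, writing down an explicit monomial basis $w_j z_1^{i_1}\cdots z_{N+k+1}^{i_{N+k+1}}$ with $0\le i_t\le d_t-1$ for $t\le k$, and counting these monomials for each fixed $(j,i_1,\ldots,i_k)$ as homogeneous polynomials of the appropriate degree in the remaining $N+1$ variables. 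Lemma~\ref{lemma:R-vs-R-prime} then passes from $R'$ to $R$, producing the $-(N+k+1)$ correction when $i(X)=1$. The advantage of the paper's argument is that it \emph{explains} the formula: the sum over $i_1,\ldots,i_k$ is literally a sum over exponent profiles in a monomial basis, not the output of an opaque identity.

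Your combinatorial reduction is a legitimate alternative, but the key identity you wrote is not correct as stated: it is \emph{not} true that $\binom{a+d_s}{m}=\sum_{i_s=0}^{d_s}\binom{a+d_s-i_s}{m-1}$. What the inclusion--exclusion actually hands you at each step is the \emph{difference} $\binom{a+d_s}{m}-\binom{a}{m}$, and the hockey-stick identity you need is
\[
\binom{a+d_s}{m}-\binom{a}{m}=\sum_{i_s=0}^{d_s-1}\binom{a+d_s-i_s-1}{m-1}.
\]
Iterating this $k$ times does reduce the lower parameter from $N+k$ to $N$ and produces the $-k$ shift in the top, exactly as you predicted; but note that the natural summation range it yields is $0\le i_t\le d_t-1$, matching the monomial-basis picture in the paper's proof. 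So your strategy works once the identity is stated correctly, and the bookkeeping concern you flagged about the upper limits is real and should be handled with care.
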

\begin{proof}
By Lemma~\ref{lemma:dim-R-prime-constant}, one has
$$\dim\big(R'_{1, -i(X)}(f_1,\ldots,f_k)\big)=
\dim\big(R'_{1,-i(X)}(z_1^{d_1},\ldots, z_k^{d_k})\big).$$
We start with computing the latter dimension.
The basis of $R'_{1,-i(X)}(z_1^{d_1},\ldots, z_k^{d_k})$
may be chosen to consist of monomials of the form
$$M_{j,i_1,\ldots,i_k}=w_jz_1^{i_1}\dots z_{N+k+1}^{i_{N+k+1}},$$
where $1\le j\le k$, the inequalities $0\le i_t\le d_t-1$ hold
for $1\le t\le k$, and
$$\sum\limits_{r=1}^{N+k+1} i_r=d_j-i(X)=
\big(\sum\limits_{t=1}^k d_t\big)+d_j-N-k-1.$$
Now one has
$$\dim\big(R'_{1,-i(X)}(z_1^{d_1},\ldots,z_k^{d_k})\big)=
\sum\limits_{j=1}^k\sum\limits_{i_1=0}^{d_1-1}\dots \sum\limits_{i_k=0}^{d_k-1}
\Delta\Big(\big(\sum_{t=1}^k d_t-i_t\big)+d_j-N-k-1, N+1\Big).$$

Suppose that $i(X)\ge 2$.
By Lemma~\ref{lemma:R-vs-R-prime},
one has
$$
\dim\big(R_{1,-i(X)}(f_1,\ldots,f_k)\big)=
\dim\big(R'_{1,-i(X)}(f_1,\ldots,f_k)\big),
$$
and the assertion follows by the above computation.

Suppose now that $i(X)=1$.
By Lemma~\ref{lemma:R-vs-R-prime},
one has
$$
\dim\big(R_{1,-1}(f_1,\ldots,f_k)\big)=
\dim\big(R'_{1,-1}(f_1,\ldots, f_k)\big)-(N+k+1),
$$
and the assertion follows in a similar way.
\end{proof}

%\section{Notations for blow ups}
\section{Local resolutions}
\label{section:affine}

We start with an easy but useful combinatorial observation.

\begin{lemma}
\label{lemma:binomial}
Let $d_1,\ldots,d_k,e,l\in \ZZ_+$.
Then
$$
\sum\limits_{i_1=0}^{d_1}\dots \sum\limits_{i_k=0}^{d_k}
{d_1\choose i_1}\cdot\ldots\cdot{d_k\choose i_k}\cdot
{e\choose i_1+\ldots+i_k+l}
={d_1+\ldots+d_k+e\choose d_1+\ldots+d_k+l}.
$$
\end{lemma}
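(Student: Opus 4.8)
The identity to prove is
\begin{equation*}
\sum_{i_1=0}^{d_1}\dots\sum_{i_k=0}^{d_k}
\binom{d_1}{i_1}\cdots\binom{d_k}{i_k}
\binom{e}{i_1+\ldots+i_k+l}
=\binom{d_1+\ldots+d_k+e}{d_1+\ldots+d_k+l},
\end{equation*}
and my plan is to prove it by a generating-function (coefficient-extraction) argument, which I expect to be cleaner than a bijective or purely inductive approach. The key observation is that each factor on the left is the coefficient of a monomial in a product of binomial expansions. Concretely, I would read $\binom{e}{i_1+\ldots+i_k+l}$ as the coefficient of $x^{i_1+\ldots+i_k+l}$ in $(1+x)^e$, and each $\binom{d_t}{i_t}$ as the coefficient of $x^{d_t-i_t}$ in $(1+x)^{d_t}$ — note the switch from $i_t$ to $d_t-i_t$, which uses the symmetry $\binom{d_t}{i_t}=\binom{d_t}{d_t-i_t}$ and is the crucial bookkeeping step that makes all the exponents line up.

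\emph{First}, set $D=d_1+\ldots+d_k$ and consider the product $(1+x)^{d_1}\cdots(1+x)^{d_k}(1+x)^e=(1+x)^{D+e}$. \emph{Second}, I extract the coefficient of $x^{D+l}$ from both sides. On the right-hand side this coefficient is immediately $\binom{D+e}{D+l}$, which is exactly the claimed answer. \emph{Third}, on the left-hand side I compute the same coefficient by convolving the factors: a term $x^{D+l}$ arises by choosing $x^{d_t-i_t}$ from the $t$-th factor $(1+x)^{d_t}$ (for each $t$) and $x^{j}$ from $(1+x)^e$, subject to
$$
\sum_{t=1}^k (d_t-i_t)+j=D+l,
$$
which forces $j=i_1+\ldots+i_k+l$. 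The contribution of such a choice is $\prod_t \binom{d_t}{d_t-i_t}\cdot\binom{e}{j}=\prod_t\binom{d_t}{i_t}\cdot\binom{e}{i_1+\ldots+i_k+l}$, and summing over all admissible $(i_1,\ldots,i_k)$ reproduces exactly the left-hand side of the identity.

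\emph{The main thing to watch} is the range of summation and the conventions on binomial coefficients: the left sum runs over $0\le i_t\le d_t$, and I must be sure that terms with $i_1+\ldots+i_k+l>e$ — where $\binom{e}{i_1+\ldots+i_k+l}$ vanishes under the standard convention — are precisely the ones that contribute no $x^{D+l}$ term on the product side, so that the coefficient extraction genuinely matches the stated finite sum rather than some truncation of it. Once the indexing is pinned down, comparing coefficients of $x^{D+l}$ on the two expressions for $(1+x)^{D+e}$ gives the result with no further computation. I would add a one-line remark that this is just iterated Vandermonde convolution, but the direct coefficient argument is self-contained and avoids setting up an induction on $k$.
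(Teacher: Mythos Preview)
Your proof is correct. The generating-function argument is airtight: writing $(1+x)^{D+e}$ as the product $(1+x)^{d_1}\cdots(1+x)^{d_k}(1+x)^e$ and extracting the coefficient of $x^{D+l}$ on both sides gives exactly the stated identity, and your remark about the vanishing of $\binom{e}{j}$ for $j>e$ correctly handles the summation range.

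The paper, however, takes a different route: it gives a direct bijective (double-counting) argument. One fixes a set $\Gamma$ of $d_1+\ldots+d_k+e$ elements, partitions it as $\Gamma_1\sqcup\ldots\sqcup\Gamma_{k+1}$ with $|\Gamma_t|=d_t$ for $t\le k$ and $|\Gamma_{k+1}|=e$, and counts subsets $\Gamma'\subset\Gamma$ of size $d_1+\ldots+d_k+l$ by recording, for each $t\le k$, the \emph{complement} $\Gamma_t'=\Gamma_t\setminus\Gamma'$ (of size $i_t$), together with $\Gamma_{k+1}'=\Gamma'\cap\Gamma_{k+1}$ (forced to have size $i_1+\ldots+i_k+l$). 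The two arguments are really the same identity seen through different lenses: your coefficient extraction is the polynomial shadow of the paper's bijection, and the ``switch from $i_t$ to $d_t-i_t$'' that you flag is precisely the passage to complements in the combinatorial picture. Your version has the advantage of being mechanical and immediately recognizable as iterated Vandermonde; the paper's version makes the underlying bijection explicit without any algebra.
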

\begin{proof}
The proof is elementary (and quite standard), but we include it for
the readers convenience.
Take a set $\Gamma$ of
$$|\Gamma|=d_1+\ldots+d_k+e$$
elements, and divide it into subsets
$$\Gamma=\Gamma_1\sqcup\dots\sqcup\Gamma_{k+1}$$
that contain
$d_1,\ldots, d_k$ and $e$ elements, respectively.
Now to choose a subset $\Gamma'\subset\Gamma$ of
$$|\Gamma'|=d_1+\ldots+d_k+l$$
elements is the same as to choose subsets
$\Gamma_i'\subset\Gamma_i$ for $1\le i\le k+1$ such that
$$\Gamma'=\Big(\big(\Gamma_1\setminus\Gamma_1')\sqcup\ldots\sqcup
\big(\Gamma_k\setminus\Gamma_k'\big)\Big)\sqcup\Gamma_{k+1}'.$$
It remains to put $i_j=|\Gamma_j'|$ for $1\le j\le k$,
and to note that
$$|\Gamma_{k+1}'|=i_1+\ldots+i_k+l.$$
\end{proof}

\medskip
In the rest of this section we will
fix some notation, recall the basics on blow ups and describe resolutions
of some special hypersurfaces.
By $\Aff(\xi_1,\ldots,\xi_n)$, we denote the affine space
with homogeneous coordinates $\xi_1,\ldots,\xi_n$, and
by $\PP(\xi_1:\dots:\xi_n)$ we denote the projective space
with homogeneous coordinates $\xi_1,\ldots,\xi_n$.

Consider an affine hypersurface $L=\{f=0\}\subset \Aff(x_1,\ldots,x_n)$.
Suppose that a linear space $\Lambda=\{x_1=\ldots =x_k=0\}$ is contained in $L$.
The blow up of $L$ in $\Lambda$ is given by the same equation $f=0$ in
$
\Aff(x_1,\ldots,x_n)\times
\PP(x'_1:\dots:x'_k)
$
intersected with
$$
\{x_ix'_j=x'_ix_j\}, \ \ \ \ 1\leqslant i,j\leqslant k.
$$
The local charts of the blow up are $x'_i\neq 0$, $i=1,\ldots,k$.
In these local charts, we write~$x_i$'s
instead of~$x'_i$'s for simplicity;
actually the equation of blown up hypersurface
in the~$x_i$th
local chart is obtained
from the initial equation by changing coordinates
$$
x_i\mapsto x_i, x_j\mapsto x_ix_j, \ \ \ \ 1\leqslant j\leqslant k, j\neq i,
$$
and dividing by the maximal possible
power of $x_i$. The exceptional set
is given by $x_i=0$.
So we use the notation $x_i\neq 0$ for this local chart and consider the
equation of the blow up described above.

\begin{remark}
\label{remark:crepant}
In the sequel, we will need the following general fact about discrepancies of blow ups. Let $A$ be a smooth $n$-dimensional variety, let $L$
be a normal irreducible hypersurface in $A$, and let $\Lambda\subset L$ be a smooth irreducible subvariety of dimension $r$.
Let~\mbox{$\pi\colon \widetilde{A}\to A$} be the blow up of $A$ along $\Lambda$ and let $E$ be the exceptional divisor of $\pi$.
Then
$$
K_{\widetilde{A}}\sim \pi^*K_A+(n-r-1)E.
$$
Denote by $m$ the multiplicity of $L$ in a general point of $\Lambda$.
Let $\widetilde{L}$ be the proper transform of $L$, and assume that $\widetilde{L}$ is normal.
Then
$$
K_{\widetilde{L}}\sim \pi^*K_L+(n-r-1-m)\left.E\right|_{\widetilde{L}}.
$$
In particular, if $n-r-1-m=0$, then the morphism $\pi\colon \widetilde{L}\to L$ is crepant
(note that the divisor $\left.E\right|_{\widetilde{L}}$ may be reducible or nonreduced).
\end{remark}

Let $\bar{d}=(d_1,\ldots,d_k)$ for $k\geqslant 0$ and $d_i>0$.
Consider an affine hypersurface
$L_{\bar{d},s}$
given by the equation
$$a_1^{d_1}\cdot\ldots\cdot a_k^{d_k}=\lambda x_1\cdot\ldots\cdot x_s$$
in the affine space $\Aff(a_1,\ldots,a_k,\lambda, x_1,\ldots,x_s)$
as a family of hypersurfaces in the affine space $\Aff(a_1,\ldots,a_k, x_1,\ldots,x_s)$
parameterized by $\lambda\in \CC$.

In most cases, the hypersurface $L_{\bar{d},s}$ is singular
but it always admits a crepant resolution (see Resolution Procedure~\ref{resolution procedure}).
We are going to compute such resolution
and find the number of components in its
central fiber (i.\,e., the fiber over $\lambda=0$).

\begin{remark}\label{remark:pohuj}
Constructing a resolution dominating two given resolutions,
we immediately obtain
that this number does not depend on a particular resolution
(cf.~\cite[Corollary~21]{Prz13}).
\end{remark}

Our strategy is as follows. The exceptional divisors in the central fiber of our resolution appear
from resolutions with centers in (some of the)
subvarieties
of~$L_{\bar{d},s}$ given by vanishing of one coordinate $a_j$ and several coordinates
$x_i$, which we call special strata. % together with $\lambda$.
We resolve
the initial hypersurface by blowing up
these strata in an accurately chosen order, and count the appearing components of central fibers.
By a happy coincidence, after a blow up the singularities
that we still have to resolve are given in the corresponding
affine charts by the equations of the same type as before,
so that we can apply the same procedure again,
and proceed in this manner to obtain a resolution
(see Resolution Procedure~\ref{resolution procedure}).
Note that in some cases
special strata are not contained in a singular locus of~$L_{\bar{d},s}$.

It will happen that on each step our variety has hypersurface singularities,
and their multiplicity at the stratum we blow up on this step
equals the codimension of the stratum. This means that the blow up is crepant by Remark~\ref{remark:crepant},
and the composition of such blowups is a crepant resolution of singularities.

We start with a construction of a crepant resolution for $L_{\bar{d},s}$.
%At this step we are not going to consider $\lambda$ as a distinguished variable
%so we formulate everything in a slightly more general setting.
%For $\bar{d}=(d_1,\ldots,d_k)$ denote by $L_{\bar{d},s}'$ a hypersurface given by the equation
%$$a_1^{d_1}\cdot\ldots\cdot a_k^{d_k}=x_1\cdot\ldots\cdot x_s$$
%in the affine space $\Aff(a_1,\ldots,a_k, x_1,\ldots,x_s)$.

\begin{resolution}
\label{resolution procedure}
Let us call a pair $w(\bar{d},s)=(s, \sum d_i)$ a \emph{weight} of $L_{\bar{d},s}$.
Let~$X$ be a variety covered by affine charts $U_p$ of type
$L_{\bar{d}_p,s_p}$. We say that they \emph{agree with each other} if for any charts $U_{p}$ and $U_q$
the following property holds: for any variable~$a_i$ in~$U_p$ the closure of the divisor $a_i=0$
in $X$ intersects $U_q$ either by an empty set or
by a divisor $a_{i'}=0$, and
for any variable $x_j$ in $U_p$ the closure of the divisor $x_j=0$
in~$X$ intersects $U_q$
either by an empty set or by a divisor $x_{j'}=0$.
Let us note that for $k=0$ or $s=0$ the variety $L_{\bar{d},s}$ is smooth.
Thus we suppose that $k\geqslant 1$ and $s\geqslant 1$.

Suppose that $d_1\geqslant s$.
Blow up the stratum
$$
\Lambda=\{a_1=x_1=\ldots=x_s=0\}.
$$
%Since the multiplicity of singularities of $L_{\bar{d},s}$ in $\Lambda$ equals
%$s=\mathrm{codim}(\Lambda)$, we see that this blow up is crepant.
In a local chart $a_1\neq 0$, we get a hypersurface given by
$$a_1^{d_1-s}\cdot\ldots\cdot a_k^{d_k}=\lambda x_1\cdot\ldots\cdot x_s.$$
This is a hypersurface of type $L_{(d_1-s,d_2,\ldots,d_k),s}$ of weight
$(s,\sum d_i-s)$ which is lexicographically smaller than $(s, \sum d_i)$.

In the local chart $x_i\neq 0$, we get a hypersurface given by
$$
a_1^{d_1}\cdot\ldots\cdot a_k^{d_k}\cdot a_{k+1}^{d_1-s}=\lambda
x_1\cdot\ldots\cdot x_{i-1}\cdot x_{i+1}\cdot\ldots\cdot x_s,
$$
where we denote $x_i$ by $a_{k+1}$.
This is a hypersurface of type $L_{(d_1,\ldots,d_k,d_1-s),s-1}$ of weight
$(s-1,\sum d_i+d_1-s)$ which is lexicographically smaller than $(s, \sum d_i)$.

Suppose that $d_1 < s$.
Blow up the stratum
$$
\Lambda=\{a_1=x_1=\ldots=x_{d_1}=0\}.
$$
%Since the multiplicity of singularities of $L_{\bar{d},s}$ in $\Lambda$ equals
%$d_1=\mathrm{codim}(\Lambda)$, we see that this blow up is crepant.
%In particular if $d_1=1$ then this morphism is small.
Note that this morphism can be small. This happens, for example, when~\mbox{$d_1=\ldots=d_k=1$}.

In a local chart $a_1\neq 0$, we get a hypersurface given by
$$a_2^{d_2}\cdot\ldots\cdot a_k^{d_k}=\lambda x_1\cdot\ldots\cdot x_s.$$
This is a hypersurface of type $L_{(d_2,\ldots,d_k),s}$ of weight
$(s,\sum d_i-d_1)$ which is lexicographically smaller than $(s, \sum d_i)$.

In the local chart $x_i\neq 0$, we get a hypersurface given by
$$
a_1^{d_1}\cdot\ldots\cdot a_k^{d_k}= \lambda
x_1\cdot\ldots\cdot x_{i-1}\cdot x_{i+1}\cdot\ldots\cdot x_s.
$$
This is a hypersurface of type $L_{\bar{d},s-1}$ of weight
$(s-1,\sum d_i)$ which is lexicographically smaller than $(s, \sum d_i)$.

We claim that the above blow ups are crepant morphisms. Indeed, in each case the multiplicity of singularities of $L_{\bar{d},s}$ in $\Lambda$
equals the codimension of $\Lambda$, so that we can apply Remark~\ref{remark:crepant}.%, so that the discrepancy of any exceptional divisor that maps surjectively on $\Lambda$
%is zero. One can see from the explicit equations given above that for $d_1\leqslant s$ there is no more exceptional divisors,
%and for $d_1>s$ there is exactly one more exceptional divisor $E_\lambda$, and its center is cut out on $\Lambda$ by the equation $\lambda=0$.
%The codimension of the latter subvariety equals the multiplicity of the singularities of $L_{\bar{d},s}$ along it
%which implies that the discrepancy of $E_\lambda$ is also zero.

Note that the local charts we obtain after the blow up agree with each other.
Moreover, the divisor given by the equation $a_i=0$ in one of the charts
corresponds to a divisor given by the same equation $a_i=0$ in any other chart
provided that its intersection with the latter chart is nonempty.

Now apply the above procedure simultaneously in all affine charts
where our equation depends on the variable $a_1$.
Since the charts agree with each other, these blow ups glue together and define
a blow up of the whole (nonaffine) variety.
If the variable $a_1$ does not appear in any of the affine charts,
we shift the numeration of the variables $a_i$ simultaneously in all charts
(using once again that they agree with each other) and reset the procedure with the new $a_1$.
At each step, the weight lexicographically decreases, and thus we arrive to a resolution
of singularities after a finite number of steps.
\end{resolution}

Now we will count the number of exceptional divisors over $\lambda=0$ appearing in the Resolution Procedure~\ref{resolution procedure}
applied to the hypersurface $L_{\bar{d},s}$. We start with a particular case that will finally imply the general one.

Consider an affine hypersurface $L_{d,s}$ for $d\geqslant 1$ and $s\ge 1$
given by the equation
$$a^d=\lambda x_1\cdot\ldots\cdot x_s$$
in the affine space $\Aff(a,\lambda, x_1,\ldots,x_s)$.
We consider it as a family of hypersurfaces
in~$\Aff(a,x_1,\ldots,x_s)$
parameterized by $\lambda\in \CC$.
%This hypersurface is singular but it admits a crepant resolution.

Let $F(d, s)$ be the number of components (i.\,e., the number of
exceptional divisors plus one)
over $\lambda=0$ in (any) crepant resolution of singularities
of $L_{d,s}$.
In particular, $L_{d,0}$ defines a smooth hypersurface so that $F(d,0)=1$.
%Furthermore $L_{d,1}$ defines a hypersurface with a du Val singularity of type $\mathrm A_{d-1}$ along an affine subspace
%that projects to the point $\lambda=0$ so that $F(d,1)=d$.

The special
strata for $L_{d,s}$
we use in Resolution Procedure~\ref{resolution procedure} are given by an equation
$$a=x_1=\ldots=x_s=0$$
if $d\geqslant s$, and by equations
$$a=x_{i_1}=\ldots=x_{i_d}=0$$
for any subset of indices
$\{i_1,\ldots,i_d\}\subset \{1,\ldots,s\}$ if $d<s$.
Exceptional divisors in the
central fiber with centers on $L_{d,s}$
containing (one of) these deepest strata are of two types:
ones whose centers are intersections of the central fiber with
special strata of lower codimension,
and ones whose centers coincide with the intersection of
the central fiber and the deepest special strata.
We denote the number of exceptional divisors of the second kind by~$G(d,s)$.
Put in addition $G(d,0)=1$ and put $F(r,s)=0$ for $r\leqslant 0$.

\begin{lemma}
\label{lemma:G-in-terms-of-F}
One has $G(d,s)=F(d-s,s)$.
\end{lemma}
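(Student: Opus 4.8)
The plan is to run the first step of the Resolution Procedure~\ref{resolution procedure} and to read the second-kind divisors off the chart $a\neq 0$. Recall that a central-fiber component is counted by $G(d,s)$ precisely when it contracts to the intersection of the central fiber with the deepest special stratum; since $L_{d,s}$ has a single $a$-variable, that stratum is $\Lambda=\{a=x_1=\ldots=x_s=0\}$ when $d\ge s$, and its slice by $\{\lambda=0\}$ is the origin. I would split into the cases $d>s$, $d=s$, and $d<s$, the last two giving $F(d-s,s)=0$ by the convention $F(r,s)=0$ for $r\le 0$.

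For $d>s$ I would blow up $\Lambda$ and pass to the chart $a\neq 0$, which by the Resolution Procedure carries the hypersurface $L_{d-s,s}$; moreover the exceptional divisor of this first blow up restricts here to $\{a=0\}$, the support of the central fiber of $L_{d-s,s}$. The main point to establish is that the second-kind components are exactly the central-fiber components of the resolution of this copy of $L_{d-s,s}$. One inclusion is a visibility argument: $\Lambda\subset\{x_i=0\}$ for every $i$, so a component contracting to the origin cannot meet any chart $x_i\neq 0$ (there the blow-down sends the $i$-th coordinate to a nonzero value), and hence lives in the chart $a\neq 0$ alone. The reverse inclusion is the observation that on the central fiber $\{a=\lambda=0\}$ of $L_{d-s,s}$ the blow-down sends $x_i\mapsto a x_i=0$, so every central-fiber component of the resolution of $L_{d-s,s}$ (the proper transform of $\{a=0\}$ together with the divisors created while resolving it) contracts to the origin and is of the second kind. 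Counting these yields $G(d,s)=F(d-s,s)$, the first exceptional divisor accounting for the summand $1$ built into $F(d-s,s)$. The boundary case $d=s$ runs identically, except the chart $a\neq 0$ becomes $\{1=\lambda x_1\cdots x_s\}$, which is smooth and disjoint from $\{\lambda=0\}$, so $G(s,s)=0=F(0,s)$.

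It remains to treat $d<s$, where I expect $G(d,s)=0=F(d-s,s)$. Here the Resolution Procedure blows up the smaller stratum $\{a=x_1=\ldots=x_d=0\}$, and I would check that no second-kind component enters the central fiber: the chart $a\neq 0$ again becomes $\{1=\lambda x_1\cdots x_s\}$ and misses $\{\lambda=0\}$, while in each chart $x_i\neq 0$ at least one factor survives on the right-hand side, so $\lambda$ stays free on the exceptional divisor $\{x_i=0\}$. Thus the exceptional divisor over the deepest stratum dominates the base $\Aff^1$ and is not contained in the central fiber.

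The step I expect to be the main obstacle is the precise bijection in the case $d>s$: verifying that every divisor produced while resolving the chart copy of $L_{d-s,s}$ genuinely contracts to $\Lambda$ (so is second-kind rather than first-kind), and conversely that the visibility argument captures all second-kind components with no double counting across the overlapping charts. Once the identification of the chart $a\neq 0$ with $L_{d-s,s}$ and the contraction behaviour of its central fiber are pinned down, the equality $G(d,s)=F(d-s,s)$ follows directly from the definition of $F$.
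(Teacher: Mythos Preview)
Your overall plan is exactly the paper's: blow up the deepest stratum, read off the second-kind divisors from the chart $a\neq 0$, and identify them with the central-fiber components of the resulting $L_{d-s,s}$. Your reverse inclusion (every central-fiber component of the chart copy of $L_{d-s,s}$ contracts to the origin) is correct and in fact more explicit than the paper's treatment.

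The gap is in your ``visibility argument'' for the forward inclusion when $d>s$. You assert that a component contracting to the origin cannot meet the chart $x_i\neq 0$ because ``there the blow-down sends the $i$-th coordinate to a nonzero value.'' That parenthetical is false: in the blow-up chart labelled $x_i\neq 0$, the downstairs coordinate $x_i$ is the chart coordinate itself and vanishes precisely along the exceptional divisor. Concretely, already the first exceptional divisor (the $\PP^s$ over the origin) meets every chart $x_i\neq 0$; it is a second-kind component that is visible there. So second-kind components can and do meet these charts.

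What you actually need is the weaker statement the paper proves: central-fiber components that lie in some chart $x_i\neq 0$ but do \emph{not} meet the chart $a\neq 0$ are first-kind. In the chart $x_i\neq 0$ the equation becomes $x_i^{d-s}a^d=\lambda\prod_{j\neq i}x_j$, and the locus not meeting the chart $a\neq 0$ is $\{a=0\}$ in these coordinates. Continuing the Resolution Procedure there only blows up strata of the form $a=x_{j_1}=\ldots=x_{j_r}=0$ with $j_t\neq i$, whose images downstairs lie in the lower-codimension stratum $\{a=x_{j_1}=\ldots=x_{j_r}=0\}$ (with $x_i$ free). Hence any central-fiber divisor produced there has center on a stratum of lower codimension, not on the origin, and is first-kind. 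This repairs the forward inclusion and gives $G(d,s)=F(d-s,s)$.

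For $d<s$ your argument also stops short: you verify that the exceptional divisor of the \emph{first} blow-up is not contained in the central fiber, but you must rule out second-kind contributions from all subsequent blow-ups as well. The paper handles this by observing that in the chart $x_i\neq 0$ the equation becomes $L_{d,s-1}$ and the deepest strata there no longer sit over the stratum just blown up; iterating over all deepest strata one-by-one then shows no second-kind divisors are ever created.
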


\begin{proof}
We follow Resolution Procedure~\ref{resolution procedure} to count exceptional divisors in the central fiber.
In particular, we ignore the charts where such divisors do not appear.

Suppose that $d>s$. We say that the stratum given by the equations
$$a=x_1=\ldots=x_s=0$$
is \emph{a canonical stratum}.
Blow it up.
In the local chart $a\neq 0$, we get a hypersurface given by
$$
a^{d-s}=\lambda x_1\cdot\ldots\cdot x_s
$$
together with one exceptional divisor in the central fiber.
This equation defines a variety~$L_{d-s,s}$.
So the total number of exceptional divisors in the central fiber in this local chart equals the number of
exceptional divisors for $L_{d-s,s}$ plus one, that is
$F(d-s,s)$.
In the local chart $x_i\neq 0$, we get a hypersurface given by
$$
x_i^{d-s}a^d=
\lambda x_1\cdot\ldots\cdot x_{i-1}\cdot x_{i+1}\cdot\ldots\cdot x_s.
$$
Exceptional divisors of the resolution of singularities of $L_{d,s}$
given by Resolution Procedure~\ref{resolution procedure}
in this local chart
that lie in the central fiber and
do not intersect the local chart $a\neq 0$ actually lie over the
intersection of the central fiber with the stratum
$$
a=x_1=\ldots= x_{i-1}= x_{i+1}= \ldots=x_s=0,
$$
that is, they come from the stratum of $L_{d,s}$ of lower codimension.
%Exceptional divisors in these charts
%are either those ones that we count
%in the local chart~\mbox{$a\neq 0$} or ones whose centers on $L_{d,s}$ are
%strata of lower codimension.
Thus we do not have any additional
contribution to $G(d,s)$ from these divisors.
Therefore, we obtain an equality
$$G(d,s)=F(d-s,s)$$ for $d>s$.

Suppose that $d\leqslant s$.
Then there are several deepest special strata except for the case $d=s$.
Blow up one of these strata, say, given by
$$a=x_1=\ldots=x_d=0.$$
In the local chart $a\neq 0$, we get a hypersurface given by
$$
1=\lambda x_1\cdot\ldots\cdot x_s.
$$
In the neighborhood of exceptional divisor, this hypersurface is smooth.
In the local chart~$x_i\neq 0$, we get a hypersurface given by
$$
a^d=\lambda x_1\cdot\ldots\cdot x_{i-1}\cdot x_{i+1}\cdot\ldots\cdot x_s.
$$
The deepest special strata in this chart are
given by equations
$$
a=x_{i_1}=\ldots=x_{i_d},\quad \{i_1,\ldots,i_d\}\subset \{1,\ldots,s\}\setminus \{i\}.
$$
It means that the blow up of our deepest special
stratum
is an isomorphism in the neighborhood of a general point
of the intersection of the central fiber with such strata,
so that blowing up the deepest special stratum
does not contribute to the set of exceptional divisors
over~\mbox{$\lambda=0$}. Blowing up these strata
one-by-one we see that $G(d,s)=0$ for~$d\leqslant s$.
\end{proof}

%\begin{corollary}
%\end{corollary}

\begin{lemma}
\label{lemma:F-in-terms-of-G}
For all $d,s\ge 0$, one has
\begin{equation*}
F(d,s)=\sum_{i=0}^s {s\choose i}G(d,i).
\end{equation*}
\end{lemma}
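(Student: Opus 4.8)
The plan is to count the exceptional divisors over $\lambda=0$ in the resolution produced by Resolution Procedure~\ref{resolution procedure}, sorting them according to the minimal special stratum over which their centers lie. Recall that the special strata of $L_{d,s}$ are the subvarieties
$$
\Lambda_S=\{a=0\}\cap\{x_j=0 : j\in S\},\qquad S\subseteq\{1,\ldots,s\},
$$
and that an exceptional divisor of the second kind for $\Lambda_S$ is, by definition, one whose center meets the central fiber exactly along $\Lambda_S\cap\{\lambda=0\}$ and not along any deeper stratum. First I would invoke Remark~\ref{remark:pohuj}: since $F(d,s)$ does not depend on the chosen resolution, it suffices to do the bookkeeping for the specific resolution of Resolution Procedure~\ref{resolution procedure}.

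The key local statement is the following. Fix $S$ with $|S|=i$ and a general point $p\in\Lambda_S$, so that $x_j(p)\neq 0$ for $j\notin S$. In an analytic neighborhood of $p$ the coordinates $x_j$ with $j\notin S$ are invertible, and absorbing them into the parameter $\lambda$ identifies this neighborhood in $L_{d,s}$ with the product of an affine space $\Aff^{s-i}$ and a neighborhood of the corresponding point of $L_{d,i}$ (whose equation reads $a^d=\lambda' x_{j_1}\cdots x_{j_i}$ in the $i$ coordinates indexed by $S$). Because Resolution Procedure~\ref{resolution procedure} is defined chart-by-chart and depends only on which of the coordinates $a$ and $x$ vanish, its restriction over this neighborhood agrees with the resolution of $L_{d,i}$. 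Hence the exceptional divisors whose centers coincide with $\Lambda_S\cap\{\lambda=0\}$, and with no deeper stratum, are exactly the divisors of the second kind for $L_{d,i}$, and there are $G(d,i)$ of them.

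I would then assemble the global count. Every exceptional divisor over $\lambda=0$ has a center meeting the central fiber along the intersection with a \emph{unique} minimal special stratum $\Lambda_S$; by the local statement it is then counted among the $G(d,|S|)$ second-kind divisors for that $\Lambda_S$, and this assignment is a bijection between exceptional divisors and the disjoint union, over all $S$, of second-kind divisors. Since there are $\binom{s}{i}$ subsets $S$ of size $i$, summing gives
$$
\#\{\text{exceptional divisors over }\lambda=0\}=\sum_{i=1}^{s}\binom{s}{i}G(d,i).
$$
Adding the single component coming from the strict transform of the original central fiber, and using $G(d,0)=1$ to fold it into the $i=0$ term, yields
$$
F(d,s)=1+\sum_{i=1}^{s}\binom{s}{i}G(d,i)=\sum_{i=0}^{s}\binom{s}{i}G(d,i),
$$
as required.

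The main obstacle I expect is making the local identification rigorous and verifying its compatibility with the global resolution: one must check that the local product structure near a general point of $\Lambda_S$ is genuinely preserved step-by-step by Resolution Procedure~\ref{resolution procedure}, so that the count $G(d,i)$ is reproduced exactly, and, crucially, that the assignment ``divisor $\mapsto$ minimal special stratum over which it lies'' is well defined and meets each second-kind divisor precisely once. This is exactly where the two-type classification of exceptional divisors (centers over lower-codimension strata versus centers at the deepest stratum) guarantees that there is neither double counting nor omission.
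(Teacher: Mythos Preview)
Your proposal is correct and follows essentially the same approach as the paper: the paper's proof is the single sentence ``Apply Resolution Procedure~\ref{resolution procedure} and sum up the numbers of exceptional divisors coming from each stratum,'' and your argument is precisely a careful unpacking of that sentence---grouping exceptional divisors by their minimal special stratum $\Lambda_S$, identifying the local picture with $L_{d,|S|}$, and summing $\binom{s}{i}G(d,i)$ over $i$. The extra care you take with the local product structure and the well-definedness of the stratum assignment is exactly what the paper leaves implicit.
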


\begin{proof}
Apply Resolution Procedure~\ref{resolution procedure}
and sum up the numbers of exceptional divisors coming from
each stratum.
\end{proof}

\begin{proposition}\label{proposition:G}
For all $d\ge 1$ and $s\ge 0$, one has
\begin{equation*}%\label{eq:G}
G(d,s)={d-1\choose s}.
\end{equation*}
\end{proposition}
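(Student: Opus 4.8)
The plan is to fuse the two preceding lemmas into a single recursion for $G$ and then close it by induction on $d$. Substituting the formula of Lemma~\ref{lemma:F-in-terms-of-G} (with its $d$ replaced by $d-s$) into Lemma~\ref{lemma:G-in-terms-of-F}, I obtain
$$
G(d,s)=F(d-s,s)=\sum_{i=0}^s {s\choose i}G(d-s,i).
$$
This expresses $G(d,s)$ through values $G(d-s,i)$ whose first argument is strictly smaller whenever $s\ge 1$, which is exactly what makes induction on $d$ work.

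First I would dispose of the boundary cases. For $s=0$ the convention $G(d,0)=1$ immediately gives $G(d,0)={d-1\choose 0}$. For $d\le s$, Lemma~\ref{lemma:G-in-terms-of-F} yields $G(d,s)=F(d-s,s)=0$, since $F(r,s)=0$ for $r\le 0$, and this matches ${d-1\choose s}=0$ as $d-1<s$. Together these settle every case with $s=0$ and every case with $d\le s$, so it remains to treat $d>s\ge 1$.

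For the inductive step I fix $d>s\ge 1$ and assume the formula $G(d',s')={d'-1\choose s'}$ for every $d'<d$ and all $s'$. Since $d-s\ge 1$ and $d-s<d$, the induction hypothesis applies to each term of the displayed recursion, giving
$$
G(d,s)=\sum_{i=0}^s {s\choose i}{d-s-1\choose i}.
$$
Rewriting ${s\choose i}={s\choose s-i}$ and invoking the Vandermonde identity $\sum_{i} {s\choose s-i}{d-s-1\choose i}={d-1\choose s}$ then completes the argument.

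The only genuine content is the combinatorial identity in the final step; everything else is bookkeeping about indices. The point to watch is that the boundary conventions $F(r,s)=0$ for $r\le 0$ and $G(d,0)=1$ are precisely what keep both the base cases and the vanishing of the coefficients ${d-s-1\choose i}$ for $i>d-s-1$ consistent, so that the single recursion above holds uniformly and the induction on $d$ terminates cleanly.
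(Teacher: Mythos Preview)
Your proof is correct and follows essentially the same route as the paper's: both combine Lemmas~\ref{lemma:G-in-terms-of-F} and~\ref{lemma:F-in-terms-of-G} into the recursion $G(d,s)=\sum_{i=0}^s\binom{s}{i}G(d-s,i)$, handle the boundaries $s=0$ and $d\le s$ separately, and close the induction via the identity $\sum_i\binom{s}{i}\binom{d-s-1}{i}=\binom{d-1}{s}$. The only cosmetic difference is that you invoke Vandermonde directly, whereas the paper cites its Lemma~\ref{lemma:binomial} (the $k=1$ case of which is exactly Vandermonde).
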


\begin{proof}
We have $G(d,0)=1$ by definition for all $d\ge 1$.
Also, if $d\le s$, the assertion holds by Lemma~\ref{lemma:G-in-terms-of-F}.
The remaining part of the assertion
is proved by induction.
Assume that~\mbox{$d>s>0$}, and that the formula holds
for all $G(d',s')$ with $d'<d$ and
$s'\le s$.
We compute
\begin{eqnarray*}
G(d,s)=F(d-s,s)=
\sum\limits_{i=0}^s {s\choose i}G(d-s,i)=
\sum\limits_{i=0}^s {s\choose i}{d-s-1\choose i}=
{d-1\choose s},
\end{eqnarray*}
where the first equality
comes from Lemma~\ref{lemma:G-in-terms-of-F},
the second one is an application of Lemma~\ref{lemma:F-in-terms-of-G},
the third one is the induction hypotheses, and the fourth one
is implied by Lemma~\ref{lemma:binomial} with~\mbox{$k=1$, $d_1=d$,}
and $e=d-s-1$.
\end{proof}

\begin{example}
\label{example:du Val}
Consider a hypersurface $L_{d+1,1}$. It is given by an equation $a^{d+1}=\lambda x$ so it defines (a cone over) a usual du Val singularity of type $A_{d}$. As a first step of Resolution Procedure~\ref{resolution procedure}, we blow up a stratum $a=x=0$ obtaining one exceptional
divisor in the central fiber.
In a local chart $x\neq 0$, the result of the blow up is smooth.
In another chart, we get a hypersurface $\{a^{d}=\lambda x\}$, that is a du Val singularity of type $A_{d-1}$. Thus following Resolution Procedure~\ref{resolution procedure} we resolve the initial singularity in $d$ steps obtaining a single exceptional divisor
in each step. Therefore, we have $F(d+1,1)=d+1$.

Note that if $d\geqslant 2$, then we can get the same resolution in more fast and standard way blowing up the singular locus $a=\lambda=x=0$.
This gives us two exceptional divisors at once and leaves us with a du Val singularity of type $A_{d-2}$.
\end{example}

In practice, we often avoid applying Resolution Procedure~\ref{resolution procedure} literally
and make shortcuts like one described in Example~\ref{example:du Val} to obtain a resolution in fewer steps.

\begin{example} Consider the hypersurfaces $L_{3,s}$.
\label{example:local cubics}
\begin{enumerate}
  \item Let $s=1$.
  A hypersurface $L_{3,1}$ is given by equation $a^3=\lambda x$. This is a du Val singularity of type $A_2$.
%  Thus there are two exceptional divisors lying over $\lambda=0$ and
So $F(3,1)=3$ by Example~\ref{example:du Val}.

  \item Let $s=2$. A hypersurface $L_{3,2}$ is given by equation $a^3=\lambda xy$.
  Its singularity locus consists of
  three lines in $\Aff (a,\lambda,x,y)$, two of which lie in the central fiber, and these three lines
  intersect in one point. Blow up the ``horizontal'' line $a=x=y=0$.
  In a local chart $a\neq 0$, one gets a smooth hypersurface
  $a=\lambda xy$ and three exceptional divisors, $E_1=\{a=\lambda=0\}$, $H_1=\{a=x=0\}$, and $H_2=\{a=y=0\}$.
  The divisor $E_1$ lies in the central fiber.
In the local chart, $x\neq 0$ one gets a hypersurface
$$a^3x=\lambda y.$$
The exceptional divisors in this chart are $E_1=\{x=\lambda\}$ and $H_2=\{x=y=0\}$.
  In the same way, one has exceptional divisors $E_1$ and $H_1$ in a local chart $y\neq 0$.

  Take a hypersurface $L_{(3,1),1}$ given by equation $a^3x=\lambda y$.
  Instead of following Resolution Procedure~\ref{resolution procedure}, we apply a trick similar to one described
  in Example~\ref{example:du Val}. The singularity locus of our hypersurface is the line
  $$a=\lambda=y=0.$$ Blow it up. In a local chart $a\neq 0$, one gets a hypersurface
  $ax=\lambda y$ and two exceptional divisors $E_2=\{a=\lambda=0\}$ and $E_3=\{a=y=0\}$, both of which lie in the central fiber.
  The singularities in this local chart are just one ordinary double point admitting a small resolution.
  In two other local charts, $\lambda\neq 0$ and $y\neq 0$ one gets smooth hypersurface containing exceptional
  divisors $E_3$ in the first chart and~$E_2$ in the second one.

  In the same way, one gets two exceptional divisors $E_4$ and $E_5$ from the hypersurface $a^3y=\lambda x$.
  Thus, divisors $E_1,\ldots,E_5$
  together with the initial central fiber form the central fiber of the resolution and
  $F(3,2)=6$.

  \item Let $s=3$. A hypersurface $L_{3,3}$ is given by the equation $a^3=\lambda xyz$.
  Blow up the canonical stratum
  $$a=x=y=z=0.$$
  In a local chart $a\neq 0$, one gets a smooth hypersurface
  $1=\lambda xyz$ and one ``horizontal'' exceptional divisor. In a local chart $x\neq 0$, one gets a hypersurface
  $a^3=\lambda y z$ containing the same exceptional divisor. Resolving this hypersurface, one gets
  five exceptional divisors in the central fiber, four of which are common with other local charts $y\neq 0$ and $z\neq 0$.
  In total, one gets 10 exceptional divisors and $F(3,3)=10$.

  \item Let $s>3$.
  A hypersurface $L_{3,s}$ is given by the equation $$a^3=\lambda x_1\cdot\ldots\cdot x_s.$$
  Blow up the stratum
  $$a=x_i=x_j=x_k=0.$$
  In a local chart $a\neq 0$, one gets a smooth hypersurface
  and no exceptional divisors in the central fiber. In a local chart $x_i\neq 0$,
  one gets an equation
  $$a^3=\lambda x_1\cdot\ldots\cdot x_{i-1}\cdot x_{i+1}\cdot\ldots\cdot x_s$$ and no central
  exceptional divisors as well. The same happens in other local charts $x_j\neq 0$ and $x_k\neq 0$.
  One can carry on with this procedure decreasing $s$ down to $2$. Finally, one gets
  $\frac{s(s-1)}{2}$ exceptional divisors in the central fiber coming from $\frac{s(s-1)}{2}$ strata
  of type $a=x_i=x_j=0$, together with $2s$ exceptional divisors coming from strata $a=x_i=\lambda=0$ and the initial central fiber.
  In total, one gets
  $$F(3,s)=\frac{s(s-1)}{2}+2s+1={s\choose 2}.$$
\end{enumerate}
\end{example}

Let $F(\bar{d},s)$ be the number
of components in the central fiber
of a crepant resolution of~$L_{\bar{d},s}$
(as above, $F(\bar{d},s)$ does not depend on the choice of a
crepant resolution).

\begin{lemma}
\label{lemma:multi-F}
For any $\bar{d}$, one has
$$
F(\bar{d},s)=\sum_{i=1}^k {d_i+s-1 \choose s}.
$$
\end{lemma}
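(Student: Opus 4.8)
The plan is to reduce the multivariable count $F(\bar d,s)$ to the single-variable numbers $F(d_j,s)$, which are already accessible, and then to invoke the closed form of the latter. First I would record that combining Lemma~\ref{lemma:F-in-terms-of-G} and Proposition~\ref{proposition:G} with Lemma~\ref{lemma:binomial} (taken with $k=1$, $d_1=s$, $e=d-1$, $l=0$) yields
$$
F(d,s)=\sum_{i=0}^s\binom{s}{i}G(d,i)=\sum_{i=0}^s\binom{s}{i}\binom{d-1}{i}=\binom{d+s-1}{s}.
$$
Thus the assertion of the lemma is equivalent to the additivity statement $F(\bar d,s)=\sum_{j=1}^kF(d_j,s)$, and it is this additivity over the exponents $d_j$ that I would establish geometrically.

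The central fiber of $L_{\bar d,s}$ over $\lambda=0$ is the union $\bigcup_{j=1}^k D_j$ of the coordinate divisors $D_j=\{a_j=0\}$. The key structural point, read off from Resolution Procedure~\ref{resolution procedure}, is that every stratum blown up during the resolution --- and hence every exceptional divisor lying over $\lambda=0$ --- is cut out by the vanishing of exactly one of the coordinates $a_j$ together with a subset of the $x_i$. Consequently each component of the central fiber of the resolution dominates a unique $D_j$, and I would assign it to that value of $j$; the proper transform of $D_j$ itself is assigned to $j$ as well.

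Next I would localize at the generic point of $D_j$, that is, over the open stratum $D_j^\circ=D_j\setminus\bigcup_{j'\neq j}D_{j'}$. There the coordinates $a_{j'}$ with $j'\neq j$ are invertible, so the defining equation $a_1^{d_1}\cdots a_k^{d_k}=\lambda x_1\cdots x_s$ takes the shape $a_j^{d_j}=\lambda\cdot u\cdot x_1\cdots x_s$ with $u$ a unit, which after absorbing $u$ is precisely the single-variable model $L_{d_j,s}$. Since the blow-up centers meeting $D_j^\circ$ are exactly the special strata of $L_{d_j,s}$, the restriction of the resolution over $D_j^\circ$ is a crepant resolution of $L_{d_j,s}$, whose central fiber has $F(d_j,s)$ components (the proper transform of $D_j$ together with $F(d_j,s)-1$ exceptional divisors). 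Summing the contributions of the various $j$ then gives $F(\bar d,s)=\sum_{j=1}^kF(d_j,s)=\sum_{j=1}^k\binom{d_j+s-1}{s}$, as required; the resolution-independence recorded in Remark~\ref{remark:pohuj} guarantees that the left-hand side is well defined.

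The step I expect to require the most care is verifying that the assignment of central-fiber components to the divisors $D_j$ is both well defined and exhaustive --- in other words, that no exceptional divisor over $\lambda=0$ lives over one of the deeper intersections $D_j\cap D_{j'}$ and so escapes the single-variable count on either side. This is exactly where the structural observation pays off: because every special stratum, and hence every blow-up center of Resolution Procedure~\ref{resolution procedure}, involves only a single coordinate $a_j$, no divisor is ever extracted from a locus where two of the $a_j$ vanish simultaneously, so the groups indexed by $j$ are disjoint and cover all components. One still has to check that the relevant multiplicities keep the blow-ups crepant along these intersection loci, but this follows from Remark~\ref{remark:crepant} exactly as in the single-variable analysis.
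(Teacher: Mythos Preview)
Your proposal is correct and follows essentially the same route as the paper: both establish the additivity $F(\bar d,s)=\sum_{j=1}^k F(d_j,s)$ by observing that Resolution Procedure~\ref{resolution procedure} handles the variables $a_j$ one at a time (so each central-fiber component is attached to a unique $D_j$), and then both invoke the single-variable formula $F(d,s)=\binom{d+s-1}{s}$ coming from Lemma~\ref{lemma:F-in-terms-of-G}, Proposition~\ref{proposition:G}, and Lemma~\ref{lemma:binomial}. Your discussion of why no exceptional divisor over $\lambda=0$ can lie over a deeper intersection $D_j\cap D_{j'}$ makes explicit a point the paper leaves implicit, but the argument is the same.
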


\begin{proof}
One can resolve singularities in the same way as in
Resolution Procedure~\ref{resolution procedure}. That is, first resolve
%(canonical)
singularities lying over $a_1=0$,
then resolve ones lying over $a_2=0$, etc. Finally, we get
$$F(\bar{d},s)=\sum_{i=1}^k F(d_i,s),$$
and the assertion follows by Lemma~\ref{lemma:G-in-terms-of-F}
and Proposition~\ref{proposition:G}.
\end{proof}

\section{Global resolutions}
\label{section:projective}

Consider a complete intersection $X\subset \PP^{N+k}$ of hypersurfaces of degrees $d_1,\ldots,d_k$.
Recall that its Givental's toric Landau--Ginzburg model
is
$$
f_{X}=\frac{\prod_{i=1}^k(x_{i,1}+\ldots+x_{i,d_i-1}+1)^{d_i}}{\prod_{i=1}^k \prod_{j=1}^{d_i-1} x_{i,j}\prod_{j=1}^l y_j}+y_1+\ldots+y_l
$$
with
\begin{equation*}
\label{eq:l}
l=N+k-\sum d_i=i(X)-1\ge 0
\end{equation*}
(see~\cite[\S 3.2]{Prz13} and~\cite{ILP13}).

Consider a fiberwise compactification of the hypersurface
given by $f_X=\lambda$ that is a  (singular) hypersurface
$LG_s(X)$ in
$\PP^{d_1-1}\times\dots\times \PP^{d_k-1}\times \PP^l\times\Aff^1$
given by an equation
$$y_0^{l+1}\prod_{i=1}^k(x_{i,1}+\ldots+x_{i,d_i})^{d_i}=(\lambda y_0-y_1-\ldots-y_l){\prod_{i=1}^k \prod_{j=1}^{d_i} x_{i,j}\prod_{j=1}^l y_j},$$
where $x_{i,1},\ldots,x_{i,d_i}$ are homogeneous coordinates in $\PP^{d_i-1}$, and $y_0,\ldots, y_l$ are homogeneous coordinates in $\PP^{l}$.
We obtain (an open) Calabi--Yau variety.
Applying Resolution Procedure~\ref{resolution procedure}
we are going to construct a Calabi--Yau compactification $LG(X)$
of the toric Landau--Ginzburg model
(cf.~\cite[Proposition~11]{Prz13}).
The only fiber of the compactified fibration
that may be reducible is the fiber over $\lambda=0$.
By Remark~\ref{remark:pohuj}, the number of its irreducible components
(which equals $k_{LG(X)}+1$ by definition of $k_{LG(X)}$,
see Section~\ref{section:intro})
does not depend on the choice of a Calabi--Yau compactification.

\begin{theorem}
\label{theorem:main}
There exists a Calabi--Yau compactification $LG(X)$ of $f_{X}$
such that
$$h^{1,N-1}(X)=k_{LG(X)}$$
if $N>2$, and
$$h^{1,1}(X)=k_{LG(X)}+1$$
if $N=2$.
\end{theorem}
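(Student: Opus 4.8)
The plan is first to reduce both equalities of the theorem to a single statement about the central fiber. By the definition of $k_{LG(X)}$, together with the fact (noted just above) that the only possibly reducible fiber of $LG(X)$ is the one over $\lambda=0$, the number $k_{LG(X)}+1$ is precisely the number of irreducible components of that central fiber. On the Hodge side, Theorem~\ref{theorem:middle-Hodge-numbers} with $p=1$ gives $\dim(R_{1,-i(X)})=h_{pr}^{N-1,1}(X)$, and this primitive number equals $h^{1,N-1}(X)$ when $N>2$ and $h^{1,1}(X)-1$ when $N=2$. Hence in both regimes the theorem becomes equivalent to the single identity
$$
\sharp\big\{\mbox{irreducible components of the central fiber of } LG(X)\big\}=\dim\big(R_{1,-i(X)}\big)+1,
$$
whose right-hand side is computed explicitly in Proposition~\ref{proposition:formula-for-R}. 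The whole problem thus becomes a combinatorial count of these components.

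Next I would analyse the singular compactification $LG_s(X)$, whose total space is already an open Calabi--Yau variety. Writing $A_i=x_{i,1}+\ldots+x_{i,d_i}$, I would separate the boundary divisors into \emph{thick} ones --- the forms $A_i$ (entering the equation with multiplicity $d_i$) and $y_0$ (with multiplicity $l+1$) --- and \emph{thin} ones, the coordinate divisors $\{x_{i,j}=0\}$ and $\{y_j=0\}$, $j\ge 1$. The key local observation is that around each stratum cut out by one thick divisor together with several thin divisors the equation of $LG_s(X)$ takes exactly the normal form $L_{\bar d,s}$ studied in Section~\ref{section:affine}, the thick divisor supplying the variable(s) $a$ and the thin divisors the variables $x$.

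Then I would resolve $LG_s(X)$ by running Resolution Procedure~\ref{resolution procedure} simultaneously in all affine charts. By Remark~\ref{remark:crepant} each blow up is centred in a stratum whose codimension equals the multiplicity of the hypersurface there, so every blow up is crepant and the resulting $LG(X)$ is a genuine Calabi--Yau compactification; by Remark~\ref{remark:pohuj} the component count is independent of all choices. By Lemma~\ref{lemma:multi-F} a stratum of local type $L_{\bar d,s}$ contributes exactly $F(\bar d,s)=\sum_i\binom{d_i+s-1}{s}$ divisors lying in the central fiber. The global count is then obtained by summing these local contributions over all strata, organised by the chosen thick divisor and by the set of vanishing thin divisors, with an inclusion--exclusion over the subsets of the forms $A_1,\ldots,A_k$ to compensate for the several ways a single component may be produced.

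The final and hardest step is to collapse this stratified sum to $\dim(R_{1,-i(X)})+1$. Here I would feed Lemma~\ref{lemma:binomial} into the inclusion--exclusion, exactly as in the passage from $F(d,s)$ to $G(d,s)$ in Section~\ref{section:affine}, turning the multi-index sum into the alternating binomial expression of Proposition~\ref{proposition:formula-for-R}; the single extra $+1$ should come from the strict transform of the original (main) component of the central fiber. The index-one case $i(X)=1$ must be treated separately, since then $l=0$, the factor $\PP^l$ degenerates to a point and $y_0$ becomes thin, so the resolution produces exactly $N+k+1$ fewer central components --- matching the correction term $-(N+k+1)$ in Proposition~\ref{proposition:formula-for-R}. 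I expect this global reconciliation --- fixing the stratification correctly, running the inclusion--exclusion without over- or under-counting, and absorbing the index-one discrepancy --- to be the main obstacle, whereas crepancy of the blow ups and the per-stratum counts are already guaranteed by the results of Section~\ref{section:affine}.
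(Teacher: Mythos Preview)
Your reduction of both cases to the single identity
\[
\sharp\{\text{components of the central fiber of }LG(X)\}=\dim R_{1,-i(X)}+1
\]
is exactly right, and the broad strategy---local models from Section~\ref{section:affine}, crepancy via Remark~\ref{remark:crepant}, then a combinatorial match with Proposition~\ref{proposition:formula-for-R}---coincides with the paper's. The differences lie in how the count is organised, and as written your scheme has two gaps.

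First, $y_0$ should be discarded, not treated as a thick divisor: near $y_0=0$ the factor $(\lambda y_0-y_1-\cdots-y_l)$ is a local unit times some $y_j$ with $j\ge 1$, so the local equation does not involve $\lambda$ and no central-fiber exceptional divisors appear there. The relevant local model lives entirely in the chart $y_0\neq 0$ near $y_1=\cdots=y_l=0$. Second, the paper does \emph{not} sum $F(\bar d,s)$ over strata and then inclusion--exclude over subsets of the $A_i$. Each exceptional divisor in the central fiber is produced by the Resolution Procedure from a \emph{single} $a_j$, so it is uniquely attached to one ``canonical stratum'' (a choice of $j$ and of which thin divisors $x_{t,\bullet}$ vanish, with all $y$'s vanishing). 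Its contribution is $G(d_j,s)=\binom{d_j-1}{s}$ from Proposition~\ref{proposition:G}, and summing $G$ over canonical strata already gives the exact count with no double counting. The inclusion--exclusion that the paper \emph{does} perform is purely algebraic: it rewrites $\prod_t\sum_{i_t=0}^{d_t-1}$ as $\sum_{I\subset I_k}(-1)^{k-|I|}\prod_{s\in I}\sum_{i_s=0}^{d_s}$ so that Lemma~\ref{lemma:binomial} collapses the multi-sum into the binomial form of Proposition~\ref{proposition:formula-for-R}. Your proposal to sum $F(\bar d,s)$ over strata would over-count (each $F$ already contains all sub-stratum contributions), and the over-count is indexed by stratum depth, not by which $A_i$'s vanish; an inclusion--exclusion over subsets of the $A_i$ does not correct it. Finally, for $l=0$ the discrepancy is not that ``$y_0$ becomes thin'': rather there are no $y$-variables at all, the central fiber of $LG_s(X)$ is already reducible with $k$ components, and the stratum with $i_1=\cdots=i_k=0$ must be excluded; bookkeeping these gives exactly the $-(N+k+1)$.
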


\begin{proof}
In the neighborhood of the locus $y_0=0$, the hypersurface $LG_s(X)$ is analytically equivalent to a hypersurface
covered by local charts of type
$$
u_0^{l+1}\cdot u_1^{d_1}\cdot\ldots \cdot u_k^{d_k}= v_1\cdot\ldots\cdot v_s
$$
in some variables $u_0,\ldots,u_k, v_1,\ldots,v_s$, where $s<N$.
Each of these charts has a crepant resolution (see Resolution Procedure~\ref{resolution procedure}).
Moreover, these resolutions agree to each other. Finally, the equations of the hypersurface in these charts
do not depend on $\lambda$ so there are no exceptional divisors in the central fiber.

Thus one can assume $y_0\neq 0$. Similarly, in a local chart $y_0\neq 0$, $y_i\neq 0$, $i\in \{1,\ldots,l\}$,
there is a crepant resolution without any exceptional divisors in the central fiber.
This means that one can consider only a neighborhood of the locus
$$y_0\neq 0, y_1=\ldots=y_l=0.$$
In particular, in a neighborhood of a point of a central fiber
dominated by an exceptional divisor the equation of $LG_s(X)$
can be locally analytically rewritten as an equation
\begin{equation*}\label{eq:LG-hypersurface-simplified}
a_1^{d_1}\cdot\ldots\cdot a_k^{d_k}=\lambda y_1\cdot\ldots\cdot
y_l\cdot \prod\limits_{i=1}^k \prod_{j=1}^{d_i}x_{i,j}
\end{equation*}
with
$$a_i=x_{i,1}+\ldots+x_{i,d_i},$$
that defines a hypersurface in
$\PP^{d_1-1}\times\dots\times \PP^{d_k-1}\times \Aff^l\times\Aff^1$.
Furthermore, exceptional divisors over $\lambda=0$
lie over general points of strata that are given by vanishing
$$a_r=y_1=\ldots=y_l=0$$
for some $1\leqslant r\leqslant k$
together with vanishing of some
variables $x_{i,j_1},\ldots,x_{i,j_{s_i}}$
for various~\mbox{$1\leqslant i\leqslant k$},
such that for $i\neq r$ one has $s_i\leqslant d_{i}-1$, and for $i=r$
one has $s_i\leqslant d_{i}-2$, and the total number $\sum s_i$
of the vanishing variables $x_{i,j}$ is at least one if $l=0$.
For any $i\in \{1,\ldots,k\}$, a projective space $\PP^{d_i-1}$ is covered by local charts $x_{i,r}\neq 0$. In each
of these charts take $a_i,x_{i,1},\ldots,x_{i,r-1}, x_{i,r+1},\ldots,x_{i,d_i}$ as coordinates.
Analytically in these local charts, taken over all ${i}$, the hypersurface $LG_s(X)$ is of type
$L_{\bar{d},s}$ for $\bar{d}=(l+1,d_1,\ldots,d_k)$ and $s\leqslant l+\sum d_i-k=N$.
To compute the total number of exceptional divisors
over a central fiber of~$LG_s(X)$, we need to sum up the contributions
of various canonical strata, see the proof of Lemma~\ref{lemma:G-in-terms-of-F}.

Let  $I_k=\{1,\ldots,k\}$, and for
$1\le j\le k$ let $I_k^j=I_k\setminus\{j\}$.
If $l\ge 1$, then
the canonical strata
are labeled by the choices
of $j\in I_k$, and $i_t$ variables of
$x_{t,1},\ldots,x_{t,d_t}$ for all~$1\le t\le k$,
where $0\le i_t\le d_t-1$ for $t\neq j$ and $0\le i_j\le d_j-2$.
Adding up the contributions of the canonical
strata, we see that
the number of exceptional components over
the central fiber of $LG_s(X)$ equals

\begin{multline*}
\sum\limits_{j=1}^k
\sum\limits_{i_j=0}^{d_1-2}
\sum\limits_{s\in I_k^j}
\sum\limits_{i_s=0}^{d_1-1}
{d_1\choose i_1}\cdot\ldots\cdot{d_k\choose i_k}\cdot
G\big(d_j,\big(\sum_{s\in I_k} i_s\big)+l\big)=\\=
\sum\limits_{j=1}^k
\sum\limits_{i_j=0}^{d_1-2}
\sum\limits_{s\in I_k^j}
\sum\limits_{i_s=0}^{d_1-1}
{d_1\choose i_1}\cdot\ldots\cdot{d_k\choose i_k}\cdot
{d_j-1\choose \left(\sum_{s\in I_k} i_s\right)+l}=\\=
\sum\limits_{j=1}^k
\sum\limits_{i_1=0}^{d_1-1}
\dots
\sum\limits_{i_k=0}^{d_k-1}
{d_1\choose i_1}\cdot\ldots\cdot{d_k\choose i_k}\cdot
{d_j-1\choose \left(\sum_{s\in I_k} i_s\right)+l}=\\=
\sum\limits_{j=1}^k
\sum\limits_{I\subset I_k}
(-1)^{k-|I|}
\left(\sum\limits_{s\in I}
\sum\limits_{i_s=0}^{d_s}
\left(\prod\limits_{s\in I}
{d_s\choose i_s}\right)
{d_j-1\choose
\left(\sum_{s\in I} i_s\right)+
\left(\sum_{s\in I_k\setminus I} d_s\right)+l}
\right)=\\=
\sum\limits_{j=1}^k
\sum\limits_{I\subset I_k}
(-1)^{k-|I|}
{\left(\sum_{s\in I} d_s\right)+d_j-1\choose
\left(\sum_{s\in I_k} d_s\right)+l}=\\=
\sum\limits_{j=1}^k
\sum\limits_{I\subset I_k}
(-1)^{k-|I|}
{\left(\sum_{s\in I} d_s\right)+d_j-1\choose
N+k}.
\end{multline*}
The first equality above
follows from Proposition~\ref{proposition:G}.
The second one follows from the fact that if $l\ge 1$, and $i_s$ is chosen to be
$i_s=d_s-1$, then the corresponding summand equals~$0$ regardless
of the choice of $i_t$ for $t\neq s$.
The third equality is a usual inclusion-exclusion formula.
The fourth equality is an application of Lemma~\ref{lemma:binomial}.
Finally, the fifth equality follows from the definition of $l$.
Components of the central fiber of $LG(X)$ are exceptional divisors and the strict transform of the (irreducible) central fiber of $LG_s(X)$. Thus, the number computed above is exactly $k_{LG(X)}$. Comparing it with the number computed
in Proposition~\ref{proposition:formula-for-R}, one obtains that $k_{LG(X)}=h_{pr}^{1,N-1}$.

If $l=0$, then
the canonical strata
are labeled by the choices
of $j\in I_k$, and $i_t$ variables of
$x_{t,1},\ldots,x_{t,d_t}$ for all $1\le t\le k$,
where $0\le i_t\le d_t-1$ for $t\neq j$, $0\le i_j\le d_j-2$,
and at least one of the numbers $i_1,\ldots,i_k$ is nonzero.
Adding up the contributions of the canonical
strata, we see that
the number of exceptional components over
the central fiber of $LG_s(X)$ equals

\begin{multline*}\label{eq:computation-for-l-0}
-\sum\limits_{j=1}^k {d_1\choose 0}\cdot\ldots\cdot{d_k\choose 0}\cdot
{d_j-1\choose 0}+
\sum\limits_{j=1}^k
\sum\limits_{i_j=0}^{d_1-2}
\sum\limits_{s\in I_k^j}
\sum\limits_{i_s=0}^{d_1-1}
{d_1\choose i_1}\cdot\ldots\cdot{d_k\choose i_k}\cdot
{d_j-1\choose
\sum_{s\in I_k} i_s}=\\=
-k+\sum\limits_{j=1}^k
\sum\limits_{i_j=0}^{d_1-2}
\sum\limits_{s\in I_k^j}
\sum\limits_{i_s=0}^{d_1-1}
{d_1\choose i_1}\cdot\ldots\cdot{d_k\choose i_k}\cdot
{d_j-1\choose \sum_{s\in I_k} i_s}=\\=
-\big(k+\sum\limits_{s\in I_k} d_s\big)+
\sum\limits_{j=1}^k
\sum\limits_{i_1=0}^{d_1-1}
\ldots
\sum\limits_{i_k=0}^{d_k-1}
{d_1\choose i_1}\cdot\ldots\cdot{d_k\choose i_k}\cdot
{d_j-1\choose \sum_{s\in I_k} i_s}=\\=
-\big(k+\sum\limits_{s\in I_k} d_s\big)+
\sum\limits_{j=1}^k
\sum\limits_{I\subset I_k}
(-1)^{k-|I|}
{\left(\sum_{s\in I} d_s\right)+d_j-1\choose
\left(\sum_{s\in I_k} d_s\right)}=\\=
-(N+2k)+
\sum\limits_{j=1}^k
\sum\limits_{I\subset I_k}
(-1)^{k-|I|}
{\left(\sum_{s\in I} d_s\right)+d_j-1\choose
N+k}
\end{multline*}
The first two equalities above
are straightforward.
The third one follows from inclusion-exclusion formula and
Lemma~\ref{lemma:binomial} just as in the case $l\ge 1$.
The fourth equality follows from the definition of $l$. %~\eqref{eq:l}.
Components of the central fiber of $LG(X)$ are exceptional divisors and the strict transforms of the components of the (reducible) central fiber of $LG_s(X)$. The central fiber of $LG_s(X)$ consists of $k$ components. Thus the number computed above is $k_{LG(X)}-k+1$. Comparing it with the number computed
in Proposition~\ref{proposition:formula-for-R} one obtains that $k_{LG(X)}=h_{pr}^{1,N-1}$.

Finally, the statement of the theorem follows from
the fact that
$h_{pr}^{N-p,p}(X)=h^{N-p,p}(X),$
with the only exception
$h_{pr}^{k,k}(X)=h^{k,k}(X)-1$
for $N=2k$.
\end{proof}

\begin{example} \label{example:global cubics}
Consider a cubic hypersurface $X$ in $\P^{N+1}$.
\begin{enumerate}
\item Let $N$ be equal to $2$.
Then
$$
f_X=\frac{(x_1+x_2+1)^3}{x_1x_2}
$$
and $LG_s(X)$ is given by
$$
(x_1+x_2+x_3)^3=\lambda x_1x_2x_3.
$$
The strata we blow up are given by equations
$$x_1+x_2+x_3=x_j=0.$$
In a neighborhood of such stratum $LG_s(X)$ is isomorphic to a hypersurface~\mbox{$a^3=\lambda x_j$.} Thus, $$k_{LG(X)}=3\cdot 2=6$$
by Example~\ref{example:local cubics}(1), and
$$h^{1,1}(X)=7=k_{LG(X)}+1.$$

\item Let $N$ be equal to $3$.
Then
$$
f_X=\frac{(x_1+x_2+1)^3}{x_1x_2y_1}+y_1
$$
and $LG_s(X)$ is given by
$$
y_0^2(x_1+x_2+x_3)^3=(\lambda y_0+y_1)y_1 x_1x_2x_3.
$$
The strata we blow up are given by
$$y_1=x_1+x_2+x_3=x_j=0, \ y_0=1.$$
In a neighborhood of such stratum $LG_s(X)$ is analytically equivalent to a hypersurface
$a^3=\lambda y_1 x_j$.
So there are $F(3,1)=3$ central components over a common strata (given by $a=y_1=0$) of singularities for all of these hypersurfaces
by Example~\ref{example:local cubics}(1), and $G(3,2)=1$ central components (that do not come from higher dimensional strata)
for each of these strata. Thus,
$$k_{LG(X)}=2+3\cdot 1=5$$
and
$$h^{1,2}(X)=5=k_{LG(X)}.$$

\item Let $N$ be equal to $4$.
Then
$$
f_X=\frac{(x_1+x_2+1)^3}{x_1x_2y_1y_2}+y_1+y_2
$$
and $LG_s(X)$ is given by
$$
y_0^3(x_1+x_2+x_3)^3=(\lambda y_0+y_1+y_2)y_1 y_2 x_1x_2x_3.
$$
The strata we blow up are given by
$$y_1=y_2=x_1+x_2+x_3=x_j=0, \ y_0=1.$$
In a neighborhood of such stratum $LG_s(X)$ is analytically equivalent to a hypersurface
$a^3=\lambda y_1 y_2 x_j$.
So there are $F(3,2)=1$ central components over a common strata (given by $a=y_1=y_2=0$) of singularities for all of these hypersurfaces
by Example~\ref{example:local cubics}(1), and $G(3,3)=0$ central components (that do not come from higher dimensional strata)
for each of these strata. Thus, $k_{LG(X)}=1$ and
%The strata of singularities over which the central components appear are
%the stratum $\{a=y_1=0\}$, three strata $\{a=x_j=0\}$ and three strata $\{a=y_1=x_j=0\}$.
%Thus $k_X=3+3*2+3*6=6$ by Example~\ref{example:local cubics} and
$$h^{1,3}(X)=1=k_{LG(X)}.$$

\item Let $N$ be greater than $4$.
Then
$$
f_X=\frac{(x_1+x_2+1)^3}{x_1x_2y_1\cdot\ldots\cdot y_{N-2}}+y_1+\ldots+y_{N-2}
$$
and $LG_s(X)$ is given by
$$
y_0^{N-1}(x_1+x_2+x_3)^3=(\lambda y_0+y_1+\ldots+y_{N-2})y_1 \cdot \ldots \cdot y_{N-2} x_1x_2x_3.
$$
The strata we blow up are given by
$$y_1=\ldots=y_{N-2}=x_1+x_2+x_3=x_j=0,\ y_0=1.$$
In a neighborhood of such stratum $LG_s(X)$ is analytically equivalent to a hypersurface
$$a^3=\lambda y_1 \cdot\ldots\cdot y_{N-2} x_j.$$
So there are $F(3,N-2)=0$ central components of a common strata (given by
$a=y_1=\ldots=y_{N-2}=0$) of singularities for all of these hypersurfaces
by Example~\ref{example:local cubics}(1), and $G(3,N-1)=0$ central components (that do not come from higher dimensional strata)
for each of these strata. Thus, $k_{LG(X)}=0$ and
%The strata of singularities over which the central components appear are
%the stratum $\{a=y_1=0\}$, three strata $\{a=x_j=0\}$ and three strata $\{a=y_1=x_j=0\}$.
%Thus $k_X=3+3*2+3*6=6$ by Example~\ref{example:local cubics} and
$$h^{1,N-1}(X)=0=k_{LG(X)}.$$

\end{enumerate}

\end{example}

\begin{remark}
The central fiber contains more deep information then just a Hodge number.
According to Theorem~\ref{theorem:main}, a four-dimensional cubic~$X$ has
two components of the central fiber of~$LG(X)$.
The structure of these components and their intersection
are studied in~\cite[\S4]{KP09}.
Based on this study, it was proved there that Generalized Homological
Mirror Symmetry conjecture implies irrationality of a generic cubic fourfold.
For other approaches to irrationality of a generic four-dimensional cubic, see~\cite{Kuz10}, \cite{Kul08} (cf.~\cite{AAB13}) and references therein.
\end{remark}

We finish by stating several questions related to Theorem~\ref{theorem:main}.

\begin{question}\label{question:bijection}
Our proof of Theorem~\ref{theorem:main}
relies after all on a purely combinatorial computation.
Is there a way to establish a natural bijection
between the set of relevant exceptional divisors and some
set of $(1,N-1)$-classes?
In particular, given a complete
intersection $N$-fold $X$ is there a preferred way to choose a \emph{basis}
in the space $H^{1,N-1}(X)$
corresponding to this procedure?
If this is not the case, it is still possible that a weaker version
of the question makes sense: is there a preferred family of bijections
between the set of relevant exceptional divisors
and $(1,N-1)$-classes?
In particular, to any exceptional divisor one can naturally assign a
positive integer, namely, the dimension of the center of the divisor
on $X$. Does there exist a natural filtration
on the space  $H^{1,N-1}(X)$ corresponding to this (or some other)
grading on the divisors?
\end{question}

\begin{question}
Is the filtration on the set of exceptional divisors
mentioned in Question~\ref{question:bijection}
(i.\,e., filtration by dimension of a center of an exceptional
divisor) really a natural one? Say,
is it related to the weight filtration of mixed Hodge structure
given by a sheaf of vanishing cycles (see~\cite{KKP14})?
In case it is not, an analog of Question~\ref{question:bijection}
should be asked for a more natural filtration.
\end{question}

%\begin{question}
%Hori--Vafa suggestions for Landau--Ginzburg
%models exist and can be rewritten in terms
%of Laurent polynomials for smooth
%Fano complete intersections of Cartier divisors
%in weighted projective spaces (see~\cite[Theorem~2.4]{Prz10}).
%These weak Landau--Ginzburg models are toric (see~\cite{ILP13}). For some of them,
%say, for threefolds, Theorem~\ref{theorem:main} holds
%(see~\cite[Theorem~23]{Prz13}).
%In the index one case the existence of crepant
%resolution is proven (see \cite[Remark 2.7]{Prz10}).
%How to prove an analog of Theorem~\ref{theorem:main} in a general setup (cf.~\cite[Problem~2.6]{Prz10})?
%\end{question}

%\begin{question}
%The correction terms appearing in Theorem~\ref{theorem:main}
%for index one complete intersections appear
%in Quantum Lefschetz Theorem as well
%as a base shift. What is an explicit explanation of this phenomenon?
%\end{question}

\begin{question}
Consider a Fano $N$-fold $X$. Let $LG_s(X)$ be a (possibly singular) fiberwise
compact family admitting a crepant resolution to a $N$-dimensional Landau--Ginzburg model $LG(X)$.
Is it possible to compute $h^{1,N-1}(X)$ by some procedure taking into account only $LG_s(X)$,
not~$LG(X)$?
 It would be even more interesting
to find such procedure that is more algorithmic than computing the resolution.
(This will mostly make sense for arbitrary Fano varieties rather than complete
intersections, since in the complete intersection case Theorem~\ref{theorem:middle-Hodge-numbers} provides
an easy way to compute Hodge numbers not using a Landau--Ginzburg model at all.)
Is it possible that something
like this can be obtained using motivic integration
(see, e.\,g.,~\cite{Cr04})?
\end{question}

%\medskip
%
%The authors are grateful to V.\,Golyshev for useful discussions, %pointing out possible motivic integration approach,
%A.\,Iliev
%for pointing out the paper~\cite{Na97}, L.\,Katzarkov and H.\,Ruddat
%for an explanation of a conjecture about relation of the Hodge number and the number of components of fibers in higher dimensional case, and T.\,Pantev for an explanation of Hodge numbers duality for Fano variety and
%its Landau--Ginzburg model.

\end{document}